\begin{document}

 \baselineskip 16.6pt
\hfuzz=6pt

\widowpenalty=10000

\newtheorem{cl}{Claim}
\newtheorem{theorem}{Theorem}[section]
\newtheorem{proposition}[theorem]{Proposition}
\newtheorem{corollary}[theorem]{Corollary}
\newtheorem{lemma}[theorem]{Lemma}
\newtheorem{definition}[theorem]{Definition}
\newtheorem{assum}{Assumption}[section]
\newtheorem{example}[theorem]{Example}
\newtheorem{remark}[theorem]{Remark}
\renewcommand{\theequation}
{\thesection.\arabic{equation}}

\def\SL{\sqrt H}

\newcommand{\mar}[1]{{\marginpar{\sffamily{\scriptsize
        #1}}}}

\newcommand{\as}[1]{{\mar{AS:#1}}}

\newtheorem*{fact}{Fact}

\newtheorem{thm}{Theorem}[section]
\newtheorem{prop}[thm]{Proposition}
\newtheorem{cor}[thm]{Corollary}% 第一个{} 里面的是命令第二个{}里面是显示内容[]里面是计数
\newtheorem{claim}{Claim}

\theoremstyle{definition}% 定理展示的样式
\newtheorem{xca}[thm]{Exercise}

\newtheorem{question}[thm]{Question}
%\numberwithin{equation}{section} %%%公式使用章节编号

\newcommand{\cA}{\mathcal{A}}
\newcommand{\cB}{\mathcal{B}}
\newcommand{\cC}{\mathcal{C}}
\newcommand{\cF}{\mathcal{F}}% 使用\cM 命令代替\mathcal{M} 命令，简化代码
\newcommand{\cH}{{\bf H}}
\newcommand{\cK}{\mathcal{K}}
\newcommand{\cM}{\mathcal{M}}
\newcommand{\cN}{\mathcal{N}}
\newcommand{\cP}{\mathcal{P}}
\newcommand{\cS}{{\mathcal{S}}}
\newcommand{\cU}{\mathcal{U}}
\newcommand{\cX}{\mathcal{X}}
\newcommand{\dsone}{\mathds{1}}

\newcommand{\bB}{\mathbb{B}}
\newcommand{\bC}{\mathbb{C}}
\newcommand{\bD}{\mathbb{D}}
\newcommand{\bE}{\mathbb{E}}
\newcommand{\bF}{\mathbb{F}}
\newcommand{\bG}{\mathbb{G}}
\newcommand{\bN}{\mathbb{N}}
\newcommand{\bR}{\mathbb{R}}
\newcommand{\bZ}{\mathbb{Z}}
\newcommand{\bT}{\mathbb{T}}
\newcommand{\di}{\mathrm{div}}

\newcommand{\ctimes}{\rtimes_\theta}
\newcommand{\mtx}[4]{\left(\begin{array}{cc}#1&#2\\#3&#4\end{array} \right)}
\newcommand{\dvarphi}[2]{D^{(#1)}(\varphi_{#2})}
\newcommand{\xpsi}[2]{\psi^{[#1]}_{#2}(\omega)}
\newcommand{\xPsi}[2]{\Psi^{[#1]}_{#2}}
\newcommand{\dt}[1]{\partial_t^{(#1)}}
\newcommand{\vN}{vN(G)}
\newcommand{\add}[1]{\quad \text{ #1 } \quad}
\newcommand{\xp}{\rtimes_\theta }
\newcommand{\crossa}{\cM \rtimes_\theta G}
\newcommand{\uu}[2]{u_{#1}^{(#2)}}

\newcommand{\Lp}{{L_p(vN(G))}}
\newcommand{\Lplcr}[1]{{L_p(#1;\ell_2^{cr})}}
\newcommand{\Lplc}[1]{{L_p(#1;\ell_2^{c})}}
\newcommand{\Lplr}[1]{{L_p(#1;\ell_2^{r})}}
\newcommand{\Lpinfty}[1]{{L_p(#1;\ell_\infty)}}
\newcommand{\Lpli}[1]{{L_p(#1;\ell_1)}}
\newcommand{\Lpco}[1]{{L_p(#1;c_0)}}
\newcommand{\Lpcor}[1]{{L_p(#1;c_0)}}
\newcommand{\LpR}{{L_p(\bR^d;L_p(\cN))}}

\newcommand\R{\mathbb{R}}
\newcommand\RR{\mathbb{R}}
\newcommand\CC{\mathbb{C}}
\newcommand\NN{\mathbb{N}}
\newcommand\ZZ{\mathbb{Z}}
\newcommand\hDelta{{\bf L}}
\def\RN {\mathbb{R}^n}
\renewcommand\Re{\operatorname{Re}}
\renewcommand\Im{\operatorname{Im}}

\newcommand{\mc}{\mathcal}
\newcommand\D{\mathcal{D}}
\def\hs{\hspace{0.33cm}}
\newcommand{\la}{\alpha}
\def \l {\alpha}
\def\ls{\lesssim}
\def\su{{\sum_{i\in\nn}}}
\def\lz{\lambda}
\newcommand{\eps}{\varepsilon}
\newcommand{\pl}{\partial}
\newcommand{\supp}{{\rm supp}{\hspace{.05cm}}}
\newcommand{\x}{\times}
\newcommand{\lag}{\langle}
\newcommand{\rag}{\rangle}

\newcommand\wrt{\,{\rm d}}
\newcommand{\botimes}{\bar{\otimes}}
\def\nn{{\mathbb N}}
\def\bx{{\mathbb X}}
\def\fz{\infty}
\def\r{\right}
\def\lf{\left}
\def\cm{{\mathcal M}}
\def\cs{{\mathcal S}}
\def\rr{{\mathbb R}}
\def\lm{{L_{\infty}(\mathbb{R}^d)\overline{\otimes}\mathcal{M}}}
\def\rd{\mathbb{R}^d_{\theta}}
\def\ri{\mathbb{R}^2_{\theta}}
\def\rn{{{\rr}^d}}
\def\zz{{\mathbb Z}}
\def\cl{{\mathcal L}}
\def\cq{{\mathcal Q}}
\def\cd{{\mathcal D}}
\def\lz{{\lambda}}
\def\sz{{\sigma}}
\def\az{{\alpha}}
\def\dsup{{\sup}}

\theoremstyle{assumption}
\newtheorem{assumption}[theorem]{Assumption}

\title[Matrix-Weighted Besov Spaces Associated with Non-isotropic Dilations]{Matrix-Weighted Besov Spaces
Associated with Non-isotropic Dilations
}

%\author{Zhijie Fan}
%\author{Xiaofeng Wang$^*$}\thanks{$*$ Corresponding Author}
%\author{Zhicheng Zeng$^*$}
%
%\address{Zhijie Fan, School of Mathematics and Information Science,
%Guangzhou University, Guangzhou 510006, China}
%\email{fanzhj3@mail2.sysu.edu.cn}
%
%\address{Xiaofeng Wang, School of Mathematics and Information Science,
%Guangzhou University, Guangzhou 510006, China}
%\email{wxf@gzhu.edu.cn}
%
%\address{Zhicheng Zeng, School of Mathematics and Information Science,
%Guangzhou University, Guangzhou 510006, China}
%\email{zhichengzeng@e.gzhu.edu.cn}

\author{Xiong Liu}

\address{Xiong Liu, School of Mathematics and Physics\\
Gansu Center for Fundamental Research in Complex Systems Analysis and Control,
Lanzhou Jiaotong University, Lanzhou 730070\\ P. R. China}

%\address{Xiong Liu, School of Mathematics and Physics\\ Lanzhou Jiaotong University\\Lanzhou %730000\\China}
\email{liuxmath@126.com}

\author{Wenhua Wang}
%\thanks{$*$ Corresponding author}
\address{Wenhua Wang (Corresponding author), Institute for Advanced Study in Mathematics\\Harbin Institute of Technology\\ Harbin 150001\\China}
\email{whwangmath@whu.edu.cn}

%\author{Wenhua Wang}
%%\thanks{$*$ Corresponding author}
%\address{Wenhua Wang (Corresponding author), School of Mathematical and Statistics, Lanzhou University\\ Lanzhou 730000\\  P. R. China}
%\email{whwangmath@whu.edu.cn}

%\author{Tiantian Zhao}
%\address{Department of Mathematics\\ City University of Hong Kong\\Hong Kong SAR\\Mathbf{1}na
%}
%\email{L.Wang@cityu.edu.hk}
%

  \date{\today}
 \subjclass[2010]{42B25, 42B35, 47B38}
\keywords{Besov space, matrix weight, $\varphi$-transform, anisotropy, molecule.}

\begin{abstract}
Let $\alpha\in\mathbb{R}$, $p\in[1,\infty)$, $q\in(0,\infty]$, $\mathbf{W}$ be a matrix weight, and $A$ be an expansive dilation on $\mathbb{R}^d$.
  In this paper, the authors firstly investigate and develop some aspects of homogeneous anisotropic Besov spaces $\dot{B}^{\alpha,q}_{p,A}(\mathbb{R}^d,\mathbf{W})$ and inhomogeneous anisotropic Besov spaces $B^{\alpha,q}_{p,A}(\mathbb{R}^d,\mathbf{W})$
theory in the matrix weight setting. Moreover, we show that these spaces are characterized by the magnitude of the $\varphi$-transforms in appropriate sequence spaces. Notably, all these results remain novel even in the diagonal non-isotropic case (when $A = \mathrm{diag}(\lambda_1, \lambda_2, \ldots, \lambda_d)$ with  $\{\lambda_j\}_{j=1}^d \subset \mathbb{C}$).
\end{abstract}

\maketitle

 \tableofcontents

%\tableofcontents\newpage

\section{Introduction}
\setcounter{equation}{0}
\subsection{Background and motivation}
As is known to all, the theory of Besov and Triebel-Lizorkin spaces has been developed and plays an essential role in the fields of harmonic analysis and partial differential equations (see e.g. \cite{c74,cw77,f07,fs72,g14,s60,t17}).
An important reason is that, such as Lebesgue, Hardy, Sobolev, Lipschitz spaces,
etc. are some special cases of either Besov spaces or Triebel-Lizorkin spaces
(see e.g. \cite{t83}). Moreover, these spaces can be characterized by their discrete
analogues: the sequence Besov spaces.

In 1997, Volberg presented a different solution
to the matrix weighted $L^p$ boundedness of the Hilbert transform via techniques
related to classical Littlewood-Paley theory (see e.g. \cite{v97}).
Soon
afterwards, in the matrix weight setting,
the theory of function spaces is one of the most fundamental and hot topics in the fields of harmoic
analysis and functional analysis. For examples, matrix $A_p$
weighted Besov spaces were introduced by Roudenko \cite{r03} and Frazier
and Roudenko \cite{fr04}. For more information about matrix-weighted function spaces, we refer the reader to see \cite{bcyy25,bhyy25,bhyy25x,bhyy24,bhyy,bhyy25xx,lw25}.

On the other hand,
 extending classic function spaces arising in harmonic analysis of Euclidean spaces $\rn$ to other domains and non-isotropic settings is a crucial and hot topic. For instance, in the 1970's,
 Calder\'{o}n and Torchinsky \cite{c77,ct75} studied the parabolic Hardy spaces and established their some real-variable characterizations.
 In 2003, Bownik \cite{b03} investigated the anisotropic Hardy space $H_A^p(\rn)$ with $p\in(0,\,\infty)$ and $A$ being a
 general expensive matrix on $\rn$, which was a generalization of parabolic Hardy spaces.
 In 2005, Bownik \cite{b05} developed anisotropic version of Besov spaces.
 In 2007, Bownik \cite{b07} also studied anisotropic Triebel-Lizorkin spaces with doubling measures.
For more information about anisotropic function spaces, we refer the reader to see \cite{b07,bh06,blyz10,lby14,lwyy19,lyy16}.

Inspired by the previous papers, the purpose of this paper is to extend some aspects of anisotropic function spaces theory, in particular, the study of Besov spaces, previously obtained with no weights and partially for scalar
weights, to the matrix weight setting.

\subsection{Organization}
This article is organized as follows.

In Section \ref{s2}, we first recall some notations and definitions
concerning anosotropic dilations and and matrix $A_p$ weight. Finally, we introduce the anisotropic Besov spaces in the convex of matrix weight.

The aim of Section \ref{s3} is  to establish the main results and give their proof.

In Section \ref{s4}, we will introduce the inhomogeneous anisotropic Besov spaces in the convex of matrix weight.

\subsection{Notation}
Finally, we make some conventions on notation.
Let $\nn:=\{0,\,1,\, 2,\,\ldots\}$ and $\zz_+:=\{1,\, 2,\,\ldots\}$.
For any $\alpha:=(\alpha_1,\ldots,\alpha_d)\in\zz_+^d:=(\zz_+)^d$, let
$|\alpha|:=\alpha_1+\cdots+\alpha_d$ and
$$\partial^\alpha:=
\lf(\frac{\partial}{\partial x_1}\r)^{\alpha_1}\cdots
\lf(\frac{\partial}{\partial x_d}\r)^{\alpha_d}.$$
Throughout the whole paper, we denote by $C$ a \emph{positive
constant} which is independent of the main parameters, but it may
vary from line to line. For any $p\in[1,\,\fz]$, let $p'$ denote the conjugate index of $p$, which satisfies that $\frac{1}{p}+\frac{1}{p'}=1$.
The \emph{symbol} $D\ls F$ means that $D\le
CF$.
 If $D\ls F$ and $F\ls D$, we then write $D\sim F$.
 Let ${\mathrm{I}}_{d\times d}$ denote the $d\times d$ unit matrix, and $\mathbf{0}$ the origin of $\rr^d$.
If the set $E \subset\rn$, let $E^\complement:=\rn\setminus E$ and we denote $\mathbf{1}_E$ its \emph{characteristic
function}.
%If there are no special instructions, any space $\mathcal{X}(\rn)$ is denoted simply by %$\mathcal{X}$. For instance, $L^2(\rn)$ is simply denoted by $L^2$.
 Denote by $\cs(\rn)$   the \emph{space of all Schwartz functions}.
 %and $\cs'$
%its \emph{dual space} (namely, the \emph{space of all tempered distributions}).

%%%%%%%%%%%%%%%%%%%%%%%%%%%%%%%%%%%%%%%%%%%%%%%%%%%%%%%%%%%%%%%%%%

%%%%%%%%%%%%%%%%%%%%%% section 2 %%%%%%%%%%%%%%%%%%%%%%%%%%%%%%%%%%

%%%%%%%%%%%%%%%%%%%%%%%%%%%%%%%%%%%%%%%%%%%%%%%%%%%%%%%%%%%%%%%%%%%%%
\section{Preliminaries} \label{s2}
\setcounter{equation}{0}
 In this section, let us recall some basic definitions and properties of non-commutative $L_p$-spaces and anisotropic dilations, and introduce the omatrix-weighted anisotropic Besov  spaces.

\subsection{Anisotropic dilations on $\rn$}\label{n2.0}
%\section{Preliminary \label{s2}}
%\hskip\parindent
Firstly, we recall the definition of {{expansive dilations}}
on $\rn$, which is introduced by Bownik (see \cite[p.\,5]{b03}). A real $d\times d$ matrix $A$ is called an {\it
expansive dilation} (shortly a {\it dilation}), if
$\min_{\lz\in\sz(A)}|\lz|>1$, where $\sz(A)$ denotes the set of
all {\it eigenvalues} of $A$. Let $\lz_-$ and $\lz_+$ be two {\it positive numbers} such that
$$1<\lz_-<\min\{|\lz|:\ \lz\in\sz(A)\}\le\max\{|\lz|:\
\lz\in\sz(A)\}<\lz_+.$$
From \cite[Lemma 2.2]{b03}, we know that, for a fixed dilation $A$,
there exist a number $r\in(1,\,\fz)$ and a set $\Delta:=\{x\in\rn:\,|Px|<1\}$, where $P$ is some non-degenerate $d\times d$ matrix, such that $\Delta\subset r\Delta\subset A\Delta,$ and one can additionally
assume that $|\Delta|=1$, where $|\Delta|$ denotes the
$d$-dimensional Lebesgue measure of the set $\Delta$. For $k\in \zz$, let
$B_k:=A^k\Delta$. Then $B_k$ is open, and $$B_k\subset
rB_k\subset B_{k+1}, \ \ |B_k|=|\det A|^k.$$
%here and hereafter, $b:=|\det A|$.
An ellipsoid $x+B_k$ for some $x\in\rn$ and $k\in\zz$ is called a {\it dilated ball}.
Define
\begin{eqnarray}\label{e2.1}
\mathfrak{B}:=\lf\{x+B_k:\ x\in \rn,\,k\in\zz\r\}.
\end{eqnarray}
%\begin{eqnarray}\label{e2.1p}
%\mathfrak{B}_1:=\lf\{x+B_k:\ x\in \rn,\,k\in\zz\backslash\zz_+\r\} \ \ \mathrm{and} \ \
%\mathfrak{B}_2:=\lf\{x+B_k:\ x\in \rn,\,k\in\zz_+\r\}.
%\end{eqnarray}

Throughout the whole paper, let $\sigma$ be the {\it smallest integer} such that $2B_0\subset A^\sigma B_0$. Then,
for all $k,\,j\in\zz$ with $k\le j$, it holds true that
\begin{eqnarray}
&&B_k+B_j\subset B_{j+\sz},\label{e2.3}\\
&&B_k+(B_{k+\sz})^\complement\subset
(B_k)^\complement,\label{e2.4}
\end{eqnarray}
where $E+F$ denotes the {\it algebraic sum} $\{x+y:\ x\in E,\,y\in F\}$
of  sets $E,\, F\subset \rn$.

\begin{definition}
 A \textit{quasi-norm}, associated with
dilation $A$, is a Borel measurable mapping
$\rho_{A}:\rr^{n}\to [0,\infty)$, for simplicity, denoted by
$\rho$, satisfying
\begin{enumerate}
\item[\rm{(i)}] $\rho(x)>0$ for all $x \in \rn\setminus\{ \mathbf{0}\}$,
here and hereafter, $\mathbf{0}$ denotes the origin of $\rn$;
\item[\rm{(ii)}] $\rho(Ax)= |\det A|\rho(x)$ for any $x\in \rr^{d}$;
\item[\rm{(iii)}] $ \rho(x+y)\le C_A\lf[\rho(x)+\rho(y)\r]$ for
all $x,\, y\in \rr^{d}$, where $C_A\in[1,\,\fz)$ is a constant independent of $x$ and $y$.
\end{enumerate}
\end{definition}
\begin{remark}{\rm
\begin{enumerate}
\item[\rm{(i)}]
In the standard dyadic case $A:=2{{\rm I}_{d\times d}}$, $\rho(x):=|x|^d$ for
all $x\in\rn$ is
an example of homogeneous quasi-norms associated with $A$, here and hereafter, ${\rm I}_{d\times d}$ denotes the $d\times d$ {\it unit matrix},
$|\cdot|$ always denotes the {\it Euclidean norm} in $\rn$.
\item[\rm{(ii)}]
From \cite[Lemma 3.2]{b03}, we find that, there exists a positive constant $C$ such that, for all $x\in\rn$,
\begin{align}\label{e2.3x}
&\frac{1}{C}[\rho(x)]^{\zeta_-}\leq|x|\leq C[\rho(x)]^{\zeta_+},\ \ \mathrm{if}\ \ \rho(x)\geq1,\\ \nonumber
&\frac{1}{C}[\rho(x)]^{\zeta_+}\leq|x|\leq C[\rho(x)]^{\zeta_-},\ \ \mathrm{if}\ \ \rho(x)<1,\nonumber
\end{align}
where $\zeta_+:=\frac{\ln\lambda_+}{\ln |\det A|}$ and $\zeta_-:=\frac{\ln\lambda_-}{\ln |\det A|}$.
\item[\rm{(iii)}] Bownik has obtained that, all homogeneous quasi-norms associated with a given dilation
$A$ are equivalent, see \cite[Lemma 2.4]{b03}. Therefore, in what follows, for a
given dilation $A$, for simplicity, we
always use the {\it{step homogeneous quasi-norm}} $\rho$ defined by setting,  for all $x\in\rn$,
\begin{equation*}
\rho(x):=\sum_{k\in\zz}|\det A|^k\mathbf{1}_{B_{k+1}\setminus B_k}(x)\ {\rm
if} \ x\ne \mathbf{0},\hs {\mathrm {or\ else}\hs } \rho(\mathbf{0}):=0.
\end{equation*}
By \eqref{e2.3}, we can deduce that, for all $x,\,y\in\rn$,
$$\rho(x+y)\le
|\det A|^\sz\lf(\max\lf\{\rho(x),\,\rho(y)\r\}\r)\le |\det A|^\sz[\rho(x)+\rho(y)].$$
\end{enumerate}}
\end{remark}

\subsection{Matrix $A_p$ weight }\label{n2.0}
In this subsection, we will recall the definitions of  anisotropic matrix $A_p$ weight.

Let $\cm$ be the cone of nonnegative
definite operators on a Hilbert space $\mathcal{H}$ of dimension $n$ (consider $H=\mathbb{C}^n$ or
$\rr^n$), i.e., for $M\in\cm$ we have $\langle Mx,x\rangle_H \geq0$, for all $x\in\mathcal{H}$. Therefore, a
matrix weight $\mathbf{W}$ is an a.e. invertible map $\mathbf{W}:\rr^d\rightarrow\cm$.
Define the matrix-weighted Lebesgue space $L^p(\rr^d,\mathbf{W})$ as all measurable vector-valued function $\vec{f}:=(f_1,f_2,\ldots,f_m)^T:\rr^d\rightarrow \mathcal{H}$ satisfy
$$\lf\|\vec{f}\,\r\|_{L^p(\rr^d,\mathbf{W})}:=
\lf(\int_{\rr^d}\lf\|\mathbf{W}^{\frac1p}(x)\vec{f}(x)\r\|_{\mathcal{H}}\,dx\r)^{\frac1p}<\fz.$$

Since the $A_p$ condition can be formulated for any family of norms $\omega_t$ on a
Hilbert space $\mathcal{H}$, we will recall the following norms:
$$\omega_t(y)=\lf\|\mathbf{W}^{\frac1p}(x)y\r\|_{\mathcal{H}}, \ \mathrm{where}\  y\in\mathcal{H},
x\in\rr^d.$$
Its dual metric is defined by
$$\omega_t^*(x)=\sup_{z\in\mathcal{H},z\neq0}\frac{\langle x,z\rangle_{\mathcal{H}}}{\omega_t(z)}=\lf\|\mathbf{W}^{\frac1p}(x)y\r\|_{\mathcal{H}}, \ \mathrm{where}\  y\in\mathcal{H},
x\in\rr^d.$$
Define the norm $\omega_{p,B}$ through the averagings of the metrics
$\omega_t$ on the ball $B\in\mathfrak{B}$:
$$\omega_{p,B}(x)=\lf[\fint_B\lf(\omega_t(x)\r)^p\,dt\r]^{\frac1p}.$$
Its dual metric is defined by
$$\omega^*_{p',B}(x)=\lf[\fint_B\lf(\omega_t^*(x)\r)^{p'}\,dt\r]^{\frac{1}{p'}}.$$

The following is the definition of $A_p$ matrix weight, which is from \cite{}.
\begin{definition}
Let $p\in(1,\fz)$. We say that $\mathbf{W}$ is an $A_p$ matrix weight if
$\mathbf{W}:\rr^d\rightarrow\cm$ is such that
$\mathbf{W}$ and $\mathbf{W}^{-\frac{p'}{p}}$ are locally integrable and there exists $C<0$ such that
$$\omega^*_{p',B}\leq C(\omega_{p,B})^*.$$
\end{definition}

\begin{definition}
Let $p\in[1,\fz)$. We say that matrix weight $\mathbf{W}$ is a doubling matrix of order $p$, if
there exists $C_{p,d}$ such that, for any $y\in\mathcal{H}$, and $B\in\mathfrak{B}$,
$$\int_{2B}\lf\|\mathbf{W}^{\frac1p}(x)y\r\|_{\mathcal{H}}^p\,dx
\leq C_{p,d}\int_{B}\lf\|\mathbf{W}^{\frac1p}(x)y\r\|_{\mathcal{H}}^p\,dx.$$

\end{definition}

The following are some equivalent definitions of matrix $A_p$ weight in the anisotropic setting, whose proof is similar to Roudenko's ones \cite{r03}.

\begin{lemma}
The following condition are equivalllent:
\begin{enumerate}
\item[\rm{(i)}] $\mathbf{W}\in A_p$;

\item[\rm{(ii)}]
$\mathbf{W}^{-\frac{p'}{p}}\in A_p$;

\item[\rm{(iii)}]
$\fint_B\lf[\fint_B\lf\|\mathbf{W}^{\frac1p}(x)\mathbf{W}^{-\frac1p}(y)\r\|^{p'}dy\r]
^{\frac{p}{p'}}dx\leq C$, \ \ for any  $B\in\mathfrak{B}$;

\item[\rm{(iii)}]
$\fint_B\lf[\fint_B\lf\|\mathbf{W}^{\frac1p}(x)\mathbf{W}^{-\frac1p}(y)\r\|^{p}dy\r]
^{\frac{p'}{p}}dx\leq C$, \ \ for any  $B\in\mathfrak{B}$,
\end{enumerate}
where $\|\mathbf{W}^{\frac1p}(x)\mathbf{W}^{-\frac1p}(y)\|$ denotes the matrix norm.
\end{lemma}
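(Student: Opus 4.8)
The statement to prove is the equivalence of conditions (i)--(iv) characterizing the anisotropic matrix $A_p$ weight, with the remark that the proof parallels Roudenko's scalar/Euclidean argument from \cite{r03}. Let me sketch how I would organize this.

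\medskip

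The plan is to prove the cyclic chain of implications, exploiting the symmetry between $\mathbf{W}$ and $\mathbf{W}^{-p'/p}$ and the duality relation between $\omega_{p,B}$ and $\omega^*_{p',B}$. First I would record the elementary duality fact that for any invertible $M\in\cM$ one has $\|M^{-1}\| = \sup_{y\neq 0}\frac{\|y\|_{\mathcal H}}{\|My\|_{\mathcal H}}$, and more generally that the operator norm $\|\mathbf{W}^{1/p}(x)\mathbf{W}^{-1/p}(y)\|$ is the quantity that simultaneously controls the ``reducing'' behaviour of the averaged norms $\omega_{p,B}$ and their duals. The key translation is: the $A_p$ inequality $\omega^*_{p',B}\leq C(\omega_{p,B})^*$ as norms on $\mathcal H$ is, by taking suprema over the unit ball and unpacking the definitions of $\omega_t$ and $\omega_t^*$, equivalent to a single scalar inequality of the form
\[
\fint_B\!\left(\fint_B \big\|\mathbf{W}^{1/p}(x)\mathbf{W}^{-1/p}(y)\big\|^{p'}\,dy\right)^{p/p'}\!dx \le C,
\]
which is precisely condition (iii). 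So the bulk of the work is the equivalence (i)$\Leftrightarrow$(iii).

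\medskip

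For (i)$\Rightarrow$(iii): starting from $\omega^*_{p',B}(z)\le C(\omega_{p,B})^*(z)$ for all $z$, I would fix $x\in B$, choose the vector $z$ realizing (or nearly realizing) the operator norm $\|\mathbf{W}^{1/p}(x)\mathbf{W}^{-1/p}(y)\|$ in an appropriate averaged sense, bound $\omega^*_{p',B}(z)$ from below by the $L^{p'}(B,dy)$-average involving $\mathbf{W}^{-1/p}(y)$ and $\mathbf{W}^{1/p}(x)$, and bound $(\omega_{p,B})^*(z)$ from above; then integrate in $x$ over $B$. For the converse (iii)$\Rightarrow$(i), I would run the argument in reverse: given a unit vector $z$, estimate $\omega^*_{p',B}(z)=\big(\fint_B \omega_t^*(z)^{p'}dt\big)^{1/p'}$ by inserting $\mathbf{W}^{-1/p}(y)\mathbf{W}^{1/p}(y)$ and using Hölder/Minkowski together with the definition of the dual norm, producing a factor $\|\mathbf{W}^{1/p}(y)\mathbf{W}^{-1/p}(x)\|$ averaged against the right power, which (iii) controls. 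The symmetry (i)$\Leftrightarrow$(ii) follows because replacing $\mathbf{W}$ by $\mathbf{W}^{-p'/p}$ swaps the roles of $p$ and $p'$ and interchanges $\omega_{p,B}$ with $\omega^*_{p',B}$ up to the identity $(p')'=p$; and (iii)$\Leftrightarrow$(iv) follows by the same $p\leftrightarrow p'$ substitution applied inside the double average, since $\|\mathbf{W}^{1/p}(x)\mathbf{W}^{-1/p}(y)\|$ and $\|\mathbf{W}^{1/p}(y)\mathbf{W}^{-1/p}(x)\|$ are related by transposition/adjoint, which preserves operator norm.

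\medskip

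Throughout, the only place where anisotropy enters is that all averages $\fint_B$ are over dilated balls $B\in\mathfrak B$ rather than Euclidean balls or cubes; since the arguments above never use the geometry of $B$ beyond the fact that $|B|<\infty$ and averaging is monotone, Roudenko's reasoning transfers verbatim once we have fixed the collection $\mathfrak B$ from \eqref{e2.1}. The main obstacle I anticipate is purely bookkeeping: carefully choosing the extremizing vectors $z=z(x)$ (or $z(y)$) in a measurable way so that the passage between the operator-norm formulation and the scalar double-average is rigorous, and making sure the Hölder exponents line up so that $p/p'$ in (iii) versus $p'/p$ in (iv) come out correctly. No genuinely new idea beyond the scalar case is needed; the write-up should cite \cite{r03} for the scalar template and indicate the modifications.
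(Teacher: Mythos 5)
The paper does not actually prove this lemma: it is stated with only the remark that ``the proof is similar to Roudenko's'' \cite{r03}, so there is no argument in the text to compare yours against line by line. Your overall plan --- reduce everything to the norm inequality $\omega^*_{p',B}\le C(\omega_{p,B})^*$, exploit the symmetry $\mathbf{W}\leftrightarrow\mathbf{W}^{-p'/p}$, and observe that the anisotropy enters only through the family $\mathfrak{B}$ --- is the right adaptation of Roudenko's scalar template, and your last paragraph correctly identifies that nothing about the geometry of the dilated balls is used beyond averaging.

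There is, however, one concrete gap. You claim (iii)$\Leftrightarrow$(iv) ``follows by the same $p\leftrightarrow p'$ substitution applied inside the double average, since $\|\mathbf{W}^{1/p}(x)\mathbf{W}^{-1/p}(y)\|$ and $\|\mathbf{W}^{1/p}(y)\mathbf{W}^{-1/p}(x)\|$ are related by transposition.'' They are not: taking adjoints gives $\|\mathbf{W}^{1/p}(x)\mathbf{W}^{-1/p}(y)\|=\|\mathbf{W}^{-1/p}(y)\mathbf{W}^{1/p}(x)\|$, which leaves $x$ and $y$ attached to the same factors, and conditions (iii) and (iv) place the exponents $p'$ and $p$ on averages in \emph{different} variables, so no formal substitution inside the double integral carries one into the other. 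The way this equivalence is actually obtained (and the way Roudenko obtains it) is to show that \emph{each} of (iii) and (iv) is separately equivalent to (i), via the reducing operators $\mathcal{A}_B,\mathcal{A}_B^{\#}$ with $\omega_{p,B}(z)\approx\|\mathcal{A}_Bz\|_{\mathcal H}$ and $\omega^*_{p',B}(z)\approx\|\mathcal{A}_B^{\#}z\|_{\mathcal H}$, combined with the elementary fact that $\|M\|\approx\sum_i\|Me_i\|$ for a fixed orthonormal basis; this converts the vector-norm averages into the operator-norm double averages and collapses all four conditions to $\|\mathcal{A}_B\mathcal{A}_B^{\#}\|\le C$. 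This device also removes the need for your ``measurable choice of extremizing vectors $z(x)$,'' which is the weakest point of your (i)$\Rightarrow$(iii) sketch. Two further bookkeeping remarks: as stated, (ii) should read $\mathbf{W}^{-p'/p}\in A_{p'}$ (not $A_p$), and in the second condition labelled (iii) the inner average should be taken in $x$ and the outer in $y$; both appear to be typos in the statement that your write-up should correct rather than reproduce.
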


\subsection{Matrix-weighted Besov spaces associated with anisotropic dilations}\label{n2.0}
In this subsection, we will introduce the definitions of anisotropic matrix-weighted Besov spaces.

A $C^\infty(\rn)$ function $\varphi$ is said to belong to the Schwartz class $\cs(\rn)$ if,
for any $\ell\in\zz_+$ and multi-index $\alpha$,
$\|\varphi\|_{\alpha,\ell}:=\dsup_{x\in\rn}[\rho(x)]^\ell|\partial^\az\varphi(x)|<\infty$.

Let $\varphi\in\cs(\rr^d)$ and satisfy
\begin{align*}
\supp \hat{\varphi}\subset[-\pi,\pi]^d\backslash\{\mathbf{0}\}
\end{align*}
and
\begin{align*}
\supp \lf|\hat{\varphi}((A^*)^{-k}\xi)\r|>0,  \ \ \mathrm{for \ any} \ \xi\in\rr^d\backslash\{\mathbf{0}\}.
\end{align*}

In what follows, for $\varphi\in \cs(\rn)$, $k\in\zz$ and $x\in\rn$, let $$\varphi_k(x):= |\det A|^{-k}\varphi(A^{-k}x).$$
\begin{definition}
Let $\alpha\in\rr$, $p\in[1,\fz)$, $q\in(0,\fz]$ and $\mathbf{W}$ be a matrix weight.
The Besov space $\dot{B}_{p,A}^{\alpha,q}(\rr^d,\mathbf{W})$ is defined by, all vector-valued distributions
$\vec{f}:=(f_1,f_2,\ldots,f_m)^T$ with $f_i\in\cs'/\mathcal{P}(\rr^d)$, $i=1,\ldots,m$, such that
$$\|f\|_{\dot{B}_{p,A}^{\alpha,q}(\rr^d,\mathbf{W})}:=
\lf(\sum_{k\in\zz}|\det A|^{k\alpha q}\lf\|\vec{f}*\varphi_k\r\|_{L^p(\mathbf{W})}^q\r)^{\frac1q},$$
where $\vec{f}*\varphi_k:=(f_1*\varphi_k,f_2*\varphi_k,\ldots,f_m*\varphi_k)^T$.
\end{definition}

Let $\varphi\in\cs$ and satisfy
\begin{align*}
\supp \hat{\varphi}, \supp \hat{\psi}\subset[-\pi,\pi]^d\backslash\{\mathbf{0}\}
\end{align*}
and
\begin{align}\label{test2}
\sum_{k\in\zz}\overline{\hat{\varphi}((A^*)^{-k}\xi)}
\hat{\psi}((A^*)^{-k}\xi)=1,  \ \ \mathrm{for \ any} \ \xi\in\rr^d\backslash\{\mathbf{0}\}.
\end{align}
For any $\mathbf{j}\in\zz^d$, $k\in\zz$, define
$$Q_{\mathbf{j},k}=A^{-k}([0,1]^d+\mathbf{j})$$
be the dilated cube, $x_{Q_{\mathbf{j},k}}=A^{-k}\mathbf{j}$.
$$\mathcal{Q}=\lf\{Q_{\mathbf{j},k}:k\in\zz, \mathbf{j}\in\zz^d\r\}.$$
Define
$\varphi_Q=|\det A|^{-\frac{k}{2}}\varphi(A^{k}x-\mathbf{j})=
|Q|^{\frac12}\varphi(x-x_Q)$.

For any $\vec{f}$, $f_i\in\cs'$, define $\varphi$-transform $S_{\varphi}$:
$$S_{\varphi}(\vec{f})=\lf\{\langle\vec{f},\varphi_Q\rangle\r\}_Q
=\lf\{\lf(\langle f_1,\varphi_Q\rangle,\langle f_2,\varphi_Q\rangle,\ldots,\langle f_m,\varphi_Q\rangle\r)^T\r\}_Q.$$
Moreover, we call $\vec{s}_Q(\vec{f})=\langle \vec{f},\varphi_Q\rangle$ the $\varphi$-transform
coefficients of $\vec{f}$.

 Similarly to $\varphi_Q$, define
$$\psi_Q=|\det A|^{-\frac{k}{2}}\psi(A^{k}x-\mathbf{j})=
|Q|^{\frac12}\psi(x-x_Q).$$
The inverse $\varphi$-transform $T_{\psi}$ is the map taking a sequence $s=\{s_Q\}$ to
$T_{\psi}s=\sum_{Q\in\mathcal{Q}}s_Q\psi_Q$. In the vector case,
$$T_{\psi}\vec{s}=\sum_{Q\in\mathcal{Q}}\vec{s}_Q\psi_Q,$$
where $\vec{s}_Q\psi_Q=(s_Q^1\psi_Q,s_Q^2\psi_Q,\ldots,s_Q^m\psi_Q)^T$. The $\varphi$-transform
decomposition show that, for any $f\in\cs'\backslash\mathcal{P}(\rr^d)$,
$$f=\sum_{Q}\langle f,\varphi_Q\rangle\psi_Q=:\sum_{Q}s_Q\psi_Q.$$

\begin{definition}
Let $\alpha\in\rr$, $p\in[1,\fz)$, $q\in(0,\fz]$ and $\mathbf{W}$ be a matrix weight.
The Besov space $\dot{b}_{p,A}^{\alpha,q}(\rr^d,\mathbf{W})$ is defined by, all vector-valued sequence
$\vec{s}:=\{\vec{s}_Q\}_{Q\in\mathcal{Q}}$, where $\vec{s}_Q=(s_Q^1,s_Q^2,\ldots,s_Q^m)^T$, $i=1,\ldots,m$, such that
$$\|\vec{s}\|_{\dot{b}_{p,A}^{\alpha,q}(\mathbf{W})}:=
\lf(\sum_{k\in\zz}|\det A|^{k\alpha q}\lf\|\sum_{Q\in\mathcal{Q},|Q|=|\det A|^{-k}}|Q|^{-\frac12}|\vec{s}_Q|\mathbf{1}_Q\r\|_{L^p(\mathbf{W})}^q\r)^{\frac1q}.$$
%where $\vec{f}*\varphi_k:=(f_1*\varphi_k,f_2*\varphi_k,\ldots,f_m*\varphi_k)^T$.
\end{definition}

\begin{remark}{\rm
\begin{enumerate}
\item[\rm{(i)}]
In the isotropic setting, i.e.,
 when $A=2\mathrm{I}_{d\times d}$, these spaces reduce to the weighted-matrix Besov space $\dot{B}^{\alpha,q}_{p}(\rr^d,\mathbf{W})$.
\end{enumerate}}
\end{remark}

\section{The main results and their proof}\label{s3}
\subsection{The boundedness of inverse $\varphi$-transform}

\begin{definition}
Let $\alpha\in\rr$, $p,q\in(0,\fz]$, $\delta\in(\alpha-\lfloor\alpha\rfloor,1]$. Set $J=\frac{\beta}{p}+\max\{0,1-\frac{1}{p}\}$,
$N=\max\{\lfloor\frac{J-\alpha-1}{\zeta_-}\rfloor,-1\}$
We say that $\mathfrak{M}_Q$ is a smooth $(M,N,\delta)$-molecule associated with dyadic cube $Q\in\mathcal{Q}$ with $|Q|=|\det A|^{-k},k\in\zz$ if there exists $M>J$ such that
\begin{enumerate}
\item[\rm{(i)}]
$\lf|\mathfrak{M}_Q(x)\r|\leq|\det A|^{\frac k2}\frac{1}{\lf[1+\rho(A^k(x-x_Q))\r]^{\max\lf\{M,\frac{(M-\alpha)\zeta_+}{\zeta_-}\r\}}};$

\item[\rm{(ii)}]
$\int_{rr^d}x^{\gamma}\mathfrak{M}_Q(x)\,dx=0$, \ for $|\gamma|\leq N$;

\item[\rm{(iii)}]
$\lf|\partial^{\gamma}\mathfrak{M}_Q(A^{-k}\cdot)(x)\r|\leq|\det A|^{\frac k2}\frac{1}{[1+\rho(x-A^kx_Q)]^M},$ \ for $|\gamma|\leq \lfloor\frac{\alpha}{\zeta_-}\rfloor+1$;

\item[\rm{(iv)}]
$\lf|\partial^{\gamma}\mathfrak{M}_Q(A^{-k}\cdot)(x)-
\partial^{\gamma}\mathfrak{M}_Q(A^{-k}\cdot)(x)\r|\leq|\det A|^{\frac k2-\gamma-\delta}\rho(x-y)^{\delta}\sup_{\rho(z)\leq\rho(x-y)}\frac{1}{\lf[1+\rho(x-z-A^kx_Q)\r]^M}$,
\ \ \ if $|\gamma|=\lfloor\frac{\alpha}{\zeta_-}\rfloor$.

\end{enumerate}

\end{definition}
The proof of the following lemma is similar to \cite{r03}.
\begin{lemma}\label{mole}
Let $\alpha\in\rr$, $p\in[1,\fz)$, $q\in(0,\fz]$, and $\mathbf{W}$ be a doubling matrix weight of order $p$. Assume that $\{\mathfrak{M}_Q\}_Q$ is a family of smooth molecules for $\dot{B}_{p,A}^{\alpha,q}(\rr^d,\mathbf{W})$, then we have
$$\lf\|\sum_{Q\in\mathcal{Q}}\vec{s}_Q\mathfrak{M}_Q\r\|_{\dot{B}_{p,A}^{\alpha,q}(\rr^d,\mathbf{W})}\leq C\lf\|\{\vec{s}_Q\}_{Q\in\mathcal{Q}}\r\|_{\dot{b}_{p,A}^{\alpha,q}}.$$
\end{lemma}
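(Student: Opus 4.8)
The plan is to reduce the molecular estimate for $\dot B^{\alpha,q}_{p,A}(\rr^d,\mathbf W)$ to the already-understood behaviour of the smooth atoms/kernels $\varphi_Q$, by estimating the ``almost diagonal'' matrix that records the interaction between a molecule $\mathfrak M_Q$ and a testing bump $\varphi_R$. Concretely, I would fix an admissible analyzing pair $(\varphi,\psi)$ as in \eqref{test2} and compute, for $f=\sum_Q \vec s_Q\mathfrak M_Q$, the coefficients $\langle f*\varphi_j\rangle$ (equivalently $\langle f,\varphi_R\rangle$ with $|R|=|\det A|^{-j}$). The point is to prove a pointwise bound
$$
\lf|\langle \mathfrak M_Q,\varphi_R\rangle\r|\ls a_{QR},
$$
where $a_{QR}$ is the standard anisotropic almost-diagonal kernel: if $|Q|=|\det A|^{-k}$ and $|R|=|\det A|^{-j}$, then for $j\ge k$ one gets a factor $|\det A|^{-(j-k)(M-\alpha)\zeta_+/\zeta_-}$ coming from the smoothness/decay of $\mathfrak M_Q$ against $\varphi_R$, while for $j<k$ one gets a factor $|\det A|^{-(k-j)(\alpha+N+1)\zeta_-}$ or the like, coming from the vanishing moments (ii) of the molecule together with a Taylor expansion of $\varphi_R$; in both regimes there is the usual spatial tail $[1+ |\det A|^{\min(j,k)}\rho(x_Q-x_R)]^{-M}$ with $M>J$. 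This is exactly the derivation in Frazier--Jawerth and its matrix-weighted refinement in Roudenko \cite{r03}, transplanted to the expansive-dilation setting using the quasi-norm $\rho$, the containments \eqref{e2.3}, \eqref{e2.4}, and the comparison \eqref{e2.3x} between $\rho$ and $|\cdot|$; the hypotheses (i)--(iv) in the definition of $(M,N,\delta)$-molecule are tailored precisely so that these two one-sided decays come out.

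Next I would feed these kernel bounds into the weighted sequence space. After the molecular decomposition, $f*\varphi_j=\sum_k\sum_{|Q|=|\det A|^{-k}}\sum_{|R|=|\det A|^{-j}} \vec s_Q\,\langle\mathfrak M_Q,\psi_R\rangle\,(\varphi_R*\varphi_j)$, but it is cleaner to work at the level of $\varphi$-transform coefficients and quote the obvious fact that $\{\langle f,\varphi_R\rangle\}_R$ controls $\|f\|_{\dot B^{\alpha,q}_{p,A}(\mathbf W)}$ up to constants (the discrete characterization, which is the companion statement to this lemma). So it suffices to show $\|\{a_{QR}\}\|$ maps $\dot b^{\alpha,q}_{p,A}(\mathbf W)$ to itself. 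Summing in $Q$ for fixed $R$: split into the two regimes $k\le j$ and $k>j$, and in each use the almost-diagonal decay to sum the geometric series in $|k-j|$, while the spatial tail $[1+\cdots]^{-M}$ with $M>J=\beta/p+\max\{0,1-1/p\}$ is summable in the $\zz^d$-translation index $\mathbf j$ after the $\ell^p$ (or $\ell^\infty$) norm in $Q$ at a fixed scale is taken --- this is where the threshold $M>J$ is used and where the doubling-of-order-$p$ hypothesis on $\mathbf W$ enters, to pass the weight $\mathbf W^{1/p}$ between the cube $Q$ and the cube $R$ at comparable scales (the change-of-cube estimate for doubling matrix weights, the anisotropic analogue of Roudenko's Lemma). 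Then Minkowski/Hölder in the $\ell^q$ scale-index finishes the bound by $C\|\vec s\|_{\dot b^{\alpha,q}_{p,A}}$.

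The main obstacle, as usual in the matrix-weighted theory, is the change-of-cube step for the weight: one needs that $\|\mathbf W^{1/p}(x)\vec s_Q\mathbf 1_Q(x)\|$ summed appropriately is comparable to the same quantity with $Q$ replaced by a nearby dilated cube $R$ of comparable size, with a constant that only blows up polynomially in the ``distance'' $\rho(x_Q-x_R)$ measured in $A$-scale, and this polynomial growth must be beaten by the $[1+\cdots]^{-M}$ tail. In the anisotropic setting this requires the doubling estimate iterated along a chain of dilated balls joining $Q$ to $R$ (the length of the chain is $O(\log_{|\det A|}$ of the scale ratio plus $\rho$-distance$)$), so one gets a power of $\rho(x_Q-x_R)$ with exponent $\sim \log C_{p,d}/\log\lambda_-$; choosing $M>J$ large enough (which we may, since $M$ is only required to exceed $J$ and the molecule definition allows any such $M$) absorbs it. The other delicate point is bookkeeping the two competing decay exponents in $a_{QR}$ so that the geometric series in $|k-j|$ genuinely converges for \emph{all} admissible $\alpha$ and all $p\in[1,\infty)$, $q\in(0,\infty]$ --- this is purely a matter of checking that the exponents forced by (i) and by (ii) (the $\max\{M,(M-\alpha)\zeta_+/\zeta_-\}$ and the $N=\max\{\lfloor (J-\alpha-1)/\zeta_-\rfloor,-1\}$) straddle the critical value on both sides, which is exactly why the definition is stated the way it is. Everything else is a routine transcription of the Euclidean argument, replacing $2^{k}$ by $|\det A|^{k}$, dyadic cubes by the $Q_{\mathbf j,k}$, and $|\cdot|$-decay by $\rho$-decay via \eqref{e2.3x}.
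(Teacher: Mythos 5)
The paper offers no written proof of Lemma \ref{mole} --- it is stated with the single remark that the proof is similar to Roudenko \cite{r03} --- so the relevant comparison is with that argument, and your outline is indeed the Frazier--Jawerth/Roudenko scheme transplanted to the expansive-dilation setting: the two one-sided decays of $\langle\mathfrak M_Q,\varphi_R\rangle$ (smoothness of $\mathfrak M_Q$ paired against the vanishing moments of $\varphi_R$ when $R$ is the finer cube, the $N$ vanishing moments of $\mathfrak M_Q$ paired against a Taylor expansion of $\varphi_R$ when $R$ is the coarser one), the spatial tail of order $M>J=\beta/p+\max\{0,1-1/p\}$, and the change-of-cube estimate for a doubling matrix weight obtained by iterating the doubling inequality along a chain of dilated balls. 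That is the right skeleton and is what the paper intends.

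There is, however, one step in your reduction that is circular as written. You propose to work at the level of $\varphi$-transform coefficients and to ``quote the obvious fact that $\{\langle f,\varphi_R\rangle\}_R$ controls $\|f\|_{\dot{B}^{\alpha,q}_{p,A}(\rr^d,\mathbf{W})}$ (the discrete characterization, which is the companion statement to this lemma).'' In this paper, as in \cite{r03}, the inequality $\|f\|_{\dot{B}^{\alpha,q}_{p,A}(\rr^d,\mathbf{W})}\lesssim\|\{\langle f,\varphi_R\rangle\}_R\|_{\dot{b}^{\alpha,q}_{p,A}(\mathbf{W})}$ is \emph{derived from} the molecular lemma: it comes from writing $f=T_\psi S_\varphi f$ and invoking the boundedness of $T_\psi$, whose entire proof is ``the $\psi_Q$ are smooth molecules, apply Lemma \ref{mole}.'' So it cannot be quoted here. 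Two non-circular repairs: (a) do what Roudenko actually does and estimate $|\varphi_j*\mathfrak M_Q(x)|$ pointwise by an almost-diagonal bump, so that $\|\varphi_j*f\|_{L^p(\mathbf W)}$ is bounded directly by the sequence norm without ever sampling; or (b) if you insist on passing to sampled coefficients, justify the comparison between $\|\varphi_j*f\|_{L^p(\mathbf W)}$ and the sampled sum independently by a Plancherel--P\'olya argument for the band-limited function $\varphi_j*f$ (this is precisely what the sampling representation and the matrix-weighted maximal-type lemma of Section \ref{s3} provide), which is legitimate but is extra machinery that must be established before, not after, this lemma. With either repair the remainder of your outline --- the convergence of the geometric series in $|k-j|$ guaranteed by the choice of $M$ and $N$, and the absorption of the polynomial doubling growth by the tail exponent $M>J$ --- goes through as you describe.
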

%\begin{proof}
%By the definition, we obtain
%\begin{align*}
%\lf\|\sum_{Q\in\mathcal{Q}}\vec{s}_Q\mathfrak{M}_Q\r\|_{\dot{B}_{p,A}^{\alpha,q}(\rr^d,\mathbf{W})}
%=\lf(\sum_{k\in\zz}|\det A|^{k\alpha q}\lf\|\sum_{Q\in\mathcal{Q}}\vec{s}_Q\lf(\mathfrak{M}_Q*\varphi_k\r)\r\|_{L^p(\mathbf{W})}^q\r)^{\frac1q}\\
%=
%\end{align*}
%
%\end{proof}

\begin{thm}
Let $\alpha\in\rr$, $p\in[1,\fz)$, $q\in(0,\fz]$, and $\mathbf{W}$ be a doubling matrix weight of order $p$. Then, for any sequence $\vec{s}=\{\vec{s}_Q\}_Q\in \dot{b}^{\alpha,q}_{p,A}(\mathbf{W})$, we have
$$\lf\|T_{\psi}\vec{s}\r\|_{\dot{B}_{p,A}^{\alpha,q}(\rr^d,\mathbf{W})}=\lf\|\sum_{Q\in\mathcal{Q}}
\vec{s}_Q\psi_Q\r\|_{\dot{B}_{p,A}^{\alpha,q}(\rr^d,\mathbf{W})}\leq C\lf\|\{\vec{s}_Q\}_{Q\in\mathcal{Q}}\r\|_{\dot{b}_{p,A}^{\alpha,q}(\mathbf{W})}.$$
\end{thm}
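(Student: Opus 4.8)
The plan is to deduce this theorem as an immediate consequence of the molecule synthesis estimate in Lemma~\ref{mole}. The key observation is that the reproducing functions $\psi_Q$ are themselves a uniformly controlled family of smooth molecules for $\dot{B}_{p,A}^{\alpha,q}(\rr^d,\mathbf{W})$, so that applying Lemma~\ref{mole} with $\mathfrak{M}_Q = \psi_Q$ yields precisely the desired bound
$$\lf\|\sum_{Q\in\mathcal{Q}}\vec{s}_Q\psi_Q\r\|_{\dot{B}_{p,A}^{\alpha,q}(\rr^d,\mathbf{W})} \leq C\lf\|\{\vec{s}_Q\}_{Q\in\mathcal{Q}}\r\|_{\dot{b}_{p,A}^{\alpha,q}(\mathbf{W})}.$$

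\textbf{Verifying the molecule conditions for $\psi_Q$.} First I would recall that, by the normalization $\psi_Q = |\det A|^{-k/2}\psi(A^k x - \mathbf{j}) = |Q|^{1/2}\psi(x - x_Q)$ with $|Q| = |\det A|^{-k}$, the function $\psi_Q$ is a fixed Schwartz function $\psi \in \cs(\rn)$ dilated and translated according to the cube $Q$. Since $\psi \in \cs(\rn)$, for every $\ell$ we have the decay $|\psi(y)| \lesssim [1+\rho(y)]^{-\ell}$, and similarly for all derivatives $\partial^\gamma \psi$; this gives conditions (i), (iii), and (iv) of the molecule definition with any prescribed $M$, because one may take $\ell$ as large as needed (in particular larger than $\max\{M, (M-\alpha)\zeta_+/\zeta_-\}$) uniformly in $Q$. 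The Hölder-type estimate in (iv) follows from the mean value theorem applied to $\partial^\gamma \psi$ together with its rapid decay, using the quasi-triangle inequality for $\rho$ from Remark~2.2(iii) to absorb the shift by $z$. The vanishing-moment condition (ii), $\int x^\gamma \psi_Q(x)\,dx = 0$ for $|\gamma| \le N$, follows from the support hypothesis $\supp \hat\psi \subset [-\pi,\pi]^d \setminus \{\mathbf{0}\}$: since $\hat\psi$ vanishes identically in a neighborhood of the origin, all derivatives $\partial^\gamma \hat\psi(\mathbf{0}) = 0$, hence $\int x^\gamma \psi(x)\,dx = 0$ for every multi-index $\gamma$, and this passes to $\psi_Q$ by the change of variables $x \mapsto A^k x - \mathbf{j}$.

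\textbf{Conclusion and main obstacle.} Once the family $\{\psi_Q\}_Q$ is recognized as a family of smooth $(M,N,\delta)$-molecules for $\dot{B}_{p,A}^{\alpha,q}(\rr^d,\mathbf{W})$ with constants uniform in $Q$, Lemma~\ref{mole} applies verbatim to give the theorem. I expect the main subtlety not to lie in the moment or decay conditions—those are routine consequences of the Schwartz nature and spectral support of $\psi$—but rather in carefully matching the precise exponents appearing in the molecule definition (the quantities $J$, $N$, $M$, and the mixed exponent $\max\{M, (M-\alpha)\zeta_+/\zeta_-\}$, together with the anisotropic scaling $|\det A|^{k/2 - \gamma - \delta}$ in (iv)) against the actual decay and smoothness of $\psi$ under the non-isotropic dilation group. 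In particular one must check that the dilation-compatible derivative bounds $|\partial^\gamma(\psi_Q(A^{-k}\cdot))(x)| = |\det A|^{k/2}|\partial^\gamma\psi(x - A^k x_Q)|$ are consistent with the stated powers of $|\det A|^k$; this is the one place where the anisotropy genuinely enters, and it is handled by the equivalence \eqref{e2.3x} between $\rho$ and the Euclidean norm together with the homogeneity $\rho(A^k x) = |\det A|^k \rho(x)$.
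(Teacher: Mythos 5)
Your proposal follows exactly the paper's own argument: the paper's proof consists of the single observation that $\{\psi_Q\}_{Q\in\mathcal{Q}}$ is a family of smooth molecules for $\dot{B}_{p,A}^{\alpha,q}(\rr^d,\mathbf{W})$, so that Lemma~\ref{mole} applies directly. You in fact supply more detail than the paper does, by sketching the verification of the decay, smoothness, and vanishing-moment conditions for $\psi_Q$ (the moment condition via the spectral support of $\hat\psi$ away from the origin), which is a correct and welcome elaboration of the same route.
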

\begin{proof}
Its proof can be deduced by the fact that $\{\psi_Q\}_{Q\in\mathcal{Q}}$ is a family of smooth molecules for
$\dot{B}_{p,A}^{\alpha,q}(\rr^d,\mathbf{W})$.

\end{proof}

\subsection{The boundedness of $\varphi$-transform}

\begin{definition}
For $k\in\zz$, define
$$E_k:=\lf\{\vec{f}\in L^p(\mathbf{W}):\supp \hat{f}_i\subseteq B_{k+1}, i=1,2,\ldots,m\r\},$$
then we say that $E_k$ consists of vector function of exponential type $|\det A|^{k+1}$.
\end{definition}

\begin{lemma}
Let $f\in\cs'$, $g\in\cs(\rr^d)$, and $\supp \hat{f}$, $\supp \hat{g}\subseteq (A^*)^k[-\pi,\pi]^d$ with $k\in\zz$. Then we have
$$f*g(x)=\sum_{\mathbf{j}\in\zz^d}|\det A|^{-k}f(A^{-k}\mathbf{j})g(x-A^{-k}\mathbf{j}).$$

\end{lemma}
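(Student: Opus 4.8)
The identity to prove is a sampling formula: it says that a function of exponential type can be reconstructed from its values on the lattice $A^{-k}\zz^d$, and that convolving it with a band-limited $g$ amounts to the discrete convolution of these samples against $g$. The plan is to reduce to the case $k=0$ by a dilation change of variables and then use the Fourier-analytic structure. First I would observe that, since $\supp\hat f$ and $\supp\hat g$ are both contained in $(A^*)^k[-\pi,\pi]^d$, their product $\widehat{f*g}=\hat f\,\hat g$ is supported there as well, so $f*g$ is an entire function of the appropriate exponential type; in particular the right-hand side makes sense pointwise and the left-hand side is a genuine continuous function.

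Next I would periodize. Replacing $f$ by $f(A^{-k}\cdot)$ and $g$ by $g(A^{-k}\cdot)$ and using $\rho$-homogeneity and the factor $|\det A|^{-k}$, it suffices to prove the statement for $k=0$, i.e. when $\supp\hat f,\supp\hat g\subseteq[-\pi,\pi]^d$. In that case consider the $2\pi\zz^d$-periodization $F(\xi):=\sum_{\mathbf n\in\zz^d}\hat g(\xi+2\pi\mathbf n)$; since $\supp\hat g\subseteq[-\pi,\pi]^d$, on the fundamental cube $(-\pi,\pi)^d$ one has $F(\xi)=\hat g(\xi)$, and the Fourier coefficients of the periodic function $F$ are exactly $(2\pi)^{-d}$ times the samples $g(-\mathbf j)$ by the Fourier inversion formula (here $g\in\cs$ so everything converges absolutely and we may interchange sums and integrals freely). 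Expanding $F$ in its Fourier series and multiplying by $\hat f$ — which is legitimate because $\hat f\,\hat g=\hat f\,F$ as distributions, both sides being supported in $[-\pi,\pi]^d$ — gives
\[
\widehat{f*g}(\xi)=\hat f(\xi)\hat g(\xi)=\hat f(\xi)\sum_{\mathbf j\in\zz^d}g(-\mathbf j)\,e^{-i\mathbf j\cdot\xi}\cdot(2\pi)^{-d}\cdot(2\pi)^{d},
\]
up to the normalization bookkeeping of the chosen Fourier transform convention. Taking the inverse Fourier transform term by term and using that $e^{-i\mathbf j\cdot\xi}\hat f(\xi)$ is the Fourier transform of the translate $f(\cdot-\mathbf j)$ (equivalently $f(\cdot+\mathbf j)$ depending on sign convention) yields $f*g(x)=\sum_{\mathbf j}g(-\mathbf j)f(x+\mathbf j)$, which after relabeling $\mathbf j\mapsto-\mathbf j$ is the claimed formula; finally I would undo the dilation, which reinstates the $|\det A|^{-k}$ factor and replaces $\mathbf j$ by $A^{-k}\mathbf j$ in the arguments.

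The main obstacle is making the interchange of the infinite sum with the Fourier (inverse) transform rigorous at the level of tempered distributions, since $f$ is only assumed to be in $\cs'$: one must justify that $\hat f\cdot(\text{Fourier series of }F)$ converges in $\cs'$ and that the inverse transform commutes with that sum. This is handled by the band-limitedness — multiplication by a fixed smooth cutoff equal to $1$ on $[-\pi,\pi]^d$ and supported in, say, $2[-\pi,\pi]^d$ does not change either side, and against this compactly supported smooth multiplier the Fourier series of $F$ converges in $C^\infty$ on the support, so the product converges in $\cs'$ and the inverse transform, being continuous on $\cs'$, passes through the sum. A secondary point is bookkeeping the Fourier transform normalization so that no stray powers of $2\pi$ survive; with the convention already implicit in the paper's use of $[-\pi,\pi]^d$ as the band, the constants cancel exactly as written.
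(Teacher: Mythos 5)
The paper states this lemma without proof, so there is nothing internal to compare against; your periodization approach is the standard route (it is essentially the argument behind the analogous sampling lemmas in Frazier--Jawerth and Roudenko). However, as executed your derivation proves the \emph{transposed} identity, not the stated one. You periodize $\hat g$, obtain $\hat g(\xi)=\sum_{\mathbf j}g(-\mathbf j)e^{-i\mathbf j\cdot\xi}$ on the cube (up to normalization), multiply by $\hat f$, and invert; this yields $f*g(x)=\sum_{\mathbf j}g(-\mathbf j)f(x+\mathbf j)$, which after the relabeling $\mathbf j\mapsto-\mathbf j$ is $\sum_{\mathbf j}g(\mathbf j)f(x-\mathbf j)$ --- the formula in which $g$ is sampled and $f$ is translated. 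The lemma asserts $\sum_{\mathbf j}f(\mathbf j)g(x-\mathbf j)$ (for $k=0$), with $f$ sampled and $g$ translated. Both identities are true under the hypotheses, but they are different expressions, and no relabeling converts one into the other; your final sentence of the computation is simply incorrect. You also cannot wave at symmetry to swap $f$ and $g$, because your argument is not symmetric: the absolute convergence of the periodization and the free interchange of sum and integral were justified precisely by $g\in\cs$, whereas $f$ is only in $\cs'$.

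The fix is to periodize $\hat f$ instead. Since $\hat f$ is a compactly supported tempered distribution, $f$ is a slowly increasing $C^\infty$ function by Paley--Wiener--Schwartz (this is also what makes the pointwise values $f(A^{-k}\mathbf j)$ meaningful, a point your write-up never secures for $f$ itself, only for $f*g$). The periodization $G(\xi)=\sum_{\mathbf n}\hat f(\xi+2\pi\mathbf n)$ is a periodic distribution whose Fourier coefficients are, by the distributional inversion formula, the samples $f(-\mathbf j)$ (times the normalizing constant); multiplying its Fourier series by the fixed compactly supported smooth function $\hat g$ and inverting term by term then gives exactly $\sum_{\mathbf j}f(\mathbf j)g(x-\mathbf j)$. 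The convergence machinery you describe in your last paragraph (testing the series against a compactly supported smooth multiplier, continuity of the inverse transform on $\cs'$) is the right tool, but it must be deployed on the Fourier series of $G$, not of $F$. Two further points deserve a sentence in a complete proof: the equality $\hat f\,\hat g=G\,\hat g$ requires that no mass of $\hat f$ on the boundary of $[-\pi,\pi]^d$ gets double-counted by the periodization (harmless if the supports are taken in the open cube, as they effectively are for the $\hat\varphi$ used in the paper, but it should be said); and the reduction to $k=0$ should record that $\widehat{f(A^{-k}\cdot)}(\xi)=|\det A|^{k}\hat f((A^{*})^{k}\xi)$, which is what moves the support condition from $(A^*)^k[-\pi,\pi]^d$ to $[-\pi,\pi]^d$ and produces the factor $|\det A|^{-k}$.
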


The following is maximal operator type inequality.
\begin{lemma}
Let $p\in(1,\fz)$, $\mathbf{W}\in A_p$ and $\vec{f}\in E_0$. Then we have
$$\sum_{\mathbf{j}\in\zz^d}\int_{Q_{\mathbf{j},0}}\lf\|\mathbf{W}^{\frac1p}(x)\vec{f}(\mathbf{j})\r\|^pdx
\leq C_{p,d}\lf\|\vec{f}\,\r\|_{L^p(\mathbf{W})}.$$
\end{lemma}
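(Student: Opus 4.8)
The plan is to reduce the statement to a vector-valued maximal function estimate for functions of exponential type, exploiting the band-limited structure of $\vec{f}\in E_0$ together with the $A_p$ condition on $\mathbf{W}$. First I would fix $\vec{f}\in E_0$, so that $\supp\hat f_i\subseteq B_1=A\Delta\subset(A^*)^N[-\pi,\pi]^d$ for a suitable fixed $N$ depending only on $A$ and the normalization of $\Delta$; after replacing $A$ by $A^N$ if necessary (which only changes constants), we may assume $\supp\hat f_i\subseteq[-\pi,\pi]^d$. Then, for each unit cube $Q_{\mathbf{j},0}=[0,1]^d+\mathbf{j}$ and each $x\in Q_{\mathbf{j},0}$, I would control the pointwise value $|\vec f(\mathbf{j})|$ by a local average of $|\vec f|$ over a fixed dilated ball centered near $x$. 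This is the standard band-limited (Plancherel--P\'olya / Nikol'skii) phenomenon: a function whose Fourier transform is supported in $[-\pi,\pi]^d$ satisfies $|\vec f(\mathbf{j})|\lesssim \fint_{\mathbf{j}+B_{c}}|\vec f(z)|\,dz$ for some fixed $c\in\zz$, with constant independent of $\mathbf{j}$, obtained by writing $\vec f=\vec f*\Phi$ for a fixed Schwartz $\Phi$ with $\hat\Phi\equiv1$ on $[-\pi,\pi]^d$ and estimating $|\Phi|$ by a rapidly decaying dilated-ball profile; in the vector case this is applied coordinatewise and then the Euclidean norm is taken.

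Next I would insert the weight. Write $\mathbf{W}^{1/p}(x)\vec f(\mathbf{j})=\mathbf{W}^{1/p}(x)\bigl(\fint_{\mathbf{j}+B_c}\vec f(z)\,dz\bigr)$ up to the pointwise domination above, hence
$$\bigl\|\mathbf{W}^{1/p}(x)\vec f(\mathbf{j})\bigr\|\lesssim \fint_{\mathbf{j}+B_c}\bigl\|\mathbf{W}^{1/p}(x)\mathbf{W}^{-1/p}(z)\bigr\|\,\bigl\|\mathbf{W}^{1/p}(z)\vec f(z)\bigr\|\,dz.$$
Raising to the $p$-th power, integrating in $x$ over $Q_{\mathbf{j},0}$, and summing over $\mathbf{j}$, the task becomes bounding
$$\sum_{\mathbf{j}}\int_{Q_{\mathbf{j},0}}\Bigl(\fint_{\mathbf{j}+B_c}\bigl\|\mathbf{W}^{1/p}(x)\mathbf{W}^{-1/p}(z)\bigr\|\,g(z)\,dz\Bigr)^p dx,\qquad g:=\bigl\|\mathbf{W}^{1/p}(\cdot)\vec f(\cdot)\bigr\|.$$
Since the dilated balls $\{\mathbf{j}+B_c\}_{\mathbf{j}\in\zz^d}$ have bounded overlap and each contains $Q_{\mathbf{j},0}$, the inner integral is comparable to an anisotropic maximal-type average, and the key point is exactly the $A_p$ characterization in Lemma (the equivalent form (iii)$'$/(iv)), which controls averages of $\|\mathbf{W}^{1/p}(x)\mathbf{W}^{-1/p}(z)\|$ over dilated balls. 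Concretely I would apply H\"older's inequality in $z$ with exponents $p,p'$ to split off $\bigl(\fint \|\mathbf{W}^{1/p}(x)\mathbf{W}^{-1/p}(z)\|^{p'}dz\bigr)^{p/p'}$, integrate that factor in $x$ using the $A_p$ condition to get a uniform constant $C$, and be left with $\sum_{\mathbf{j}}\int_{\mathbf{j}+B_c}g(z)^p\,dz\lesssim\|g\|_{L^p}^p=\|\vec f\|_{L^p(\mathbf{W})}^p$ by bounded overlap. This yields the claimed inequality with $C_{p,d}$ depending only on $p$, $d$, the dilation $A$, and the $A_p$ constant of $\mathbf{W}$.

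The main obstacle is making the band-limited pointwise domination $|\vec f(\mathbf{j})|\lesssim\fint_{\mathbf{j}+B_c}|\vec f|$ genuinely uniform in $\mathbf{j}$ in the \emph{anisotropic} setting: the reproducing kernel $\Phi$ decays isotropically in the Euclidean metric, whereas the averaging balls $B_c$ are dilated balls, so one must interpolate between the Euclidean decay of $\Phi$ and the anisotropic geometry using the comparison estimates \eqref{e2.3x} relating $|x|$ and $\rho(x)$, together with \eqref{e2.3} for the sub-additivity of $\rho$. A clean way around this is to first use Lemma 3.7 (the sampling formula $f*g(x)=\sum_{\mathbf{j}}|\det A|^{-k}f(A^{-k}\mathbf{j})g(x-A^{-k}\mathbf{j})$) to reconstruct $\vec f$ from its samples $\{\vec f(\mathbf{j})\}$ via a fixed Schwartz $g$, thereby reducing everything to a convolution-type bound for the discrete operator $\{c_{\mathbf{j}}\}\mapsto\sum_{\mathbf{j}}|c_{\mathbf{j}}|\,|g(\cdot-\mathbf{j})|$ acting on $\ell_p$, after which the weighted estimate follows from the $A_p$ condition exactly as above; the remaining care is only in absorbing the rapidly decaying tails of $g$ over shells $B_{m+1}\setminus B_m$, which converge geometrically because $|B_m|=|\det A|^m$.
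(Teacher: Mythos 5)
Your overall architecture (reproduce $\vec f=\vec f*\Phi$ with $\hat\Phi\equiv1$ on the spectrum, insert $\mathbf{W}^{1/p}(x)\mathbf{W}^{-1/p}(z)$, split by H\"older with exponents $p,p'$, and invoke the $A_p$ condition) is the same skeleton as the paper's proof, but the step you build everything on is false: a function with $\supp\hat f\subseteq[-\pi,\pi]^d$ does \emph{not} satisfy $|f(\mathbf{j})|\lesssim\fint_{\mathbf{j}+B_c}|f|$ with a constant independent of $f$. (In $d=1$, multiply a high power of $\sin(\pi x/(2K))/(\pi x/(2K))$ by a polynomial $p_m$ of degree $m$ with $p_m(0)=1$ and $\fint_{-\delta}^{\delta}|p_m|\sim 1/m$; the spectrum stays in $[-\pi,\pi]$, the value at the origin stays $1$, and the local $L^1$ average over the fixed ball tends to $0$.) The Plancherel--P\'olya inequality is a global $\ell^p$ statement, not a pointwise local one; the correct pointwise estimate, and the one the paper actually uses, is $|\vec f(\mathbf{j})|\lesssim\int_{\rr^d}|\vec f(y)|\,[1+\rho(\mathbf{j}-y)]^{-N}\,dy$ with a globally supported, rapidly decaying kernel.

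Once the kernel is global, the decisive difficulty --- which your proposal does not engage with in either the main argument or the fallback paragraph --- is that contributions now come from cubes $Q_{\mathbf{j}',0}$ arbitrarily far from $Q_{\mathbf{j},0}$, and for $x\in Q_{\mathbf{j},0}$, $y\in Q_{\mathbf{j}',0}$ the factor $\bigl[\int_{Q_{\mathbf{j}',0}}\|\mathbf{W}^{1/p}(x)\mathbf{W}^{-1/p}(y)\|^{p'}\,dy\bigr]^{p/p'}$ is \emph{not} uniformly bounded: the $A_p$ condition controls it only when $x$ and $y$ range over a common dilated ball, and for separated cubes one must enlarge to a ball containing both and pay the doubling constant, which grows like a power of $\rho(\mathbf{j}-\mathbf{j}')$ governed by the doubling exponent $\beta$. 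This is exactly why the paper takes the kernel decay exponent $N>d+\beta p/p'$ rather than merely $N>d$: the surplus $\beta p/p'$ absorbs the growth of the weight factor across distant cubes. Your fallback treats the tail sum over shells $B_{m+1}\setminus B_m$ as a purely geometric matter (geometric convergence from $|B_m|=|\det A|^m$) with the weighted estimate following ``exactly as above,'' i.e.\ from the same-ball $A_p$ bound; that is precisely where the doubling/$A_p$ interplay for far-apart points is needed, and it is missing. Until the local averaging claim is replaced by the global kernel estimate and the distant-cube weight growth is quantified against $N$, the proof does not close.
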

\begin{proof}
Choose a smooth function $\phi\in\cs$ satisfying:
if $\xi\in B_1$, $\hat{\phi}=1$, and $\supp \hat{\phi}\subseteq[-\pi,\pi]^d$.
Then for any $\vec{f}\in E_0$, we have $\vec{f}=\vec{f}*\phi$.
Therefore,
 \begin{align*}
 \sum_{\mathbf{j}\in\zz^d}\int_{Q_{\mathbf{j},0}}\lf\|\mathbf{W}^{\frac1p}(x)\vec{f}(\mathbf{j})\r\|^pdx
=&\sum_{\mathbf{j}\in\zz^d}\int_{Q_{\mathbf{j},0}}\lf\|\mathbf{W}^{\frac1p}(x)\int_{\rr^d}\vec{f}(y)\phi(\mathbf{j}-y)\,dy\r\|
^pdx\\
\lesssim&\sum_{\mathbf{j}\in\zz^d}\int_{Q_{\mathbf{j},0}}\lf[\int_{\rr^d}\lf\|
\mathbf{W}^{\frac1p}(x)\vec{f}(y)\frac{1}{[\rho(1+(\mathbf{j}-y))]^N}\r\|\,dy\r]^p
dx,
\end{align*}
where $N>d+\frac{\beta p}{p'}$, $\beta$ is the doubling exponent of matrix weight $\mathbf{W}$.

By the discrete H\"{o}lder inequality, we obtain
\begin{align*}
&\sum_{\mathbf{j}\in\zz^d}\int_{Q_{\mathbf{j},0}}\lf[\int_{\rr^d}\lf\|
\mathbf{W}^{\frac1p}(x)\vec{f}(y)\frac{1}{[\rho(1+(\mathbf{j}-y))]^N}\r\|\,dy\r]^p
dx\\
\lesssim&\sum_{\mathbf{j}\in\zz^d}\int_{Q_{\mathbf{j},0}}\lf[\sum_{\mathbf{j}'\in\zz^d}
\int_{Q_{\mathbf{j}',0}}\lf\|
\mathbf{W}^{\frac1p}(x)\vec{f}(y)\frac{1}{[\rho(1+(\mathbf{j}-y))]^N}\r\|\,dy\r]^p
dx\\
\lesssim&\sum_{\mathbf{j}\in\zz^d}\int_{Q_{\mathbf{j},0}}\sum_{\mathbf{j}'\in\zz^d}
\lf[\int_{Q_{\mathbf{j}',0}}\lf\|
\mathbf{W}^{\frac1p}(x)\vec{f}(y)\r\|\,dy\r]^p\frac{1}{[\rho(1+(\mathbf{j}-y))]^N}\cdot
\lf(\sum_{\mathbf{j}'\in\zz^d}\frac{1}{[\rho(1+(\mathbf{j}-y))]^N}\r)^{\frac{p}{p'}}
dx\\
\lesssim&\sum_{\mathbf{j}\in\zz^d}\int_{Q_{\mathbf{j},0}}\sum_{\mathbf{j}'\in\zz^d}
\lf[\int_{Q_{\mathbf{j}',0}}\lf\|
\mathbf{W}^{\frac1p}(x)\vec{f}(y)\r\|\,dy\r]^p\frac{1}{[\rho(1+(\mathbf{j}-y))]^N}
dx.
\end{align*}
It follows from the H\"{o}lder inequality that
\begin{align*}
\lf[\int_{Q_{\mathbf{j}',0}}\lf\|
\mathbf{W}^{\frac1p}(x)\vec{f}(y)\r\|\,dy\r]^p
&\leq\lf[\int_{Q_{\mathbf{j}',0}}\lf\|\mathbf{W}^{\frac1p}(x)\mathbf{W}^{-\frac1p}(y)\r\|\lf\|
\mathbf{W}^{\frac1p}(y)\vec{f}(y)\r\|\,dy\r]^p\\
&\leq\lf[\int_{Q_{\mathbf{j}',0}}\lf\|\mathbf{W}^{\frac1p}(x)\mathbf{W}^{-\frac1p}(y)\r\|^{p'}\,dy\r]^
{\frac{p}{p'}}
\cdot\int_{Q_{\mathbf{j}',0}}\lf\|
\mathbf{W}^{\frac1p}(y)\vec{f}(y)\r\|^p\,dy.
\end{align*}
Now we can prove that
$$\sum_{\mathbf{j}\in\zz^d}\int_{Q_{\mathbf{j},0}}\sum_{\mathbf{j}'\in\zz^d}
\lf[\int_{Q_{\mathbf{j}',0}}\lf\|\mathbf{W}^{\frac1p}(x)\mathbf{W}^
{-\frac1p}(y)\r\|^{p'}\,dy\r]^
{\frac{p}{p'}}\frac{1}{[\rho(1+(\mathbf{j}-y))]^N}
dx\lesssim1.$$
Therefore, we have
\begin{align*}
 \sum_{\mathbf{j}\in\zz^d}\int_{Q_{\mathbf{j},0}}\lf\|\mathbf{W}^{\frac1p}(x)\vec{f}(\mathbf{j})\r\|^pdx
 &\lesssim\sum_{\mathbf{j}'\in\zz^d}\int_{Q_{\mathbf{j}',0}}\lf\|
\mathbf{W}^{\frac1p}(y)\vec{f}(y)\r\|^p\,dy\\
&\thicksim\int_{\rr^d}\lf\|
\mathbf{W}^{\frac1p}(y)\vec{f}(y)\r\|^p\,dy\\
&\thicksim\lf\|\vec{f}\,\r\|_{L^p(\mathbf{W})}^p.
\end{align*}

\end{proof}
\begin{lemma}
Let $\mathbf{W}$ be a doubling matrix of order $p$ with doubling exponent $\beta<p$, and $\vec{f}\in E_0$. Then we have
$$\sum_{\mathbf{j}\in\zz^d}\int_{Q_{\mathbf{j},0}}\lf\|\mathbf{W}^{\frac1p}(x)\vec{f}(\mathbf{j})\r\|^pdx
\leq C_{p,d}\lf\|\vec{f}\,\r\|_{L^p(\mathbf{W})}.$$
\end{lemma}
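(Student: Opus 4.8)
The plan is to reduce this statement to the previous lemma (the maximal operator type inequality for $\mathbf{W}\in A_p$) by extrapolating from the $A_p$ case to the general doubling case. The key observation is that the previous lemma requires $\mathbf{W}\in A_p$, whereas here we only assume $\mathbf{W}$ is doubling of order $p$ with doubling exponent $\beta<p$. The standard device is to pick a slightly larger exponent. Concretely, since $\beta<p$ we may choose $p_0\in(\beta,p)$; then $\mathbf{W}$, being doubling of order $p$ with small exponent, is in $A_{p/p_0}$ in the appropriate scalar-reduced sense, or more directly: for a doubling matrix weight with doubling exponent $\beta<p$ one has $\mathbf{W}^{1/p}\in$ the relevant $A_{r}$ class with $r = p/\beta' > 1$ for some $\beta'\in(\beta,p)$. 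I would state the auxiliary fact that a doubling matrix weight of order $p$ with exponent $\beta<p$ satisfies a reverse-type/self-improvement estimate allowing the $A_p$-machinery to apply after rescaling.

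The concrete steps I would carry out, in order: First, fix $\vec f\in E_0$ and reduce $\|\mathbf{W}^{1/p}(x)\vec f(\mathbf j)\|$ on $Q_{\mathbf j,0}$ to an average over a neighboring dilated ball, exactly as in the proof of the previous lemma — writing $\vec f = \vec f * \phi$ with $\hat\phi=1$ on $B_1$, $\supp\hat\phi\subset[-\pi,\pi]^d$, and bounding the convolution kernel by a decaying factor $[1+\rho(\mathbf j - y)]^{-N}$ with $N$ chosen large relative to $d$ and $\beta$. Second, apply Hölder in $y$ over each cube $Q_{\mathbf j',0}$ to split off the matrix factor $\|\mathbf{W}^{1/p}(x)\mathbf{W}^{-1/p}(y)\|$, leaving $\int_{Q_{\mathbf j',0}}\|\mathbf{W}^{1/p}(y)\vec f(y)\|^p\,dy$. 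Third — and this is where the doubling hypothesis enters in place of $A_p$ — I would show
$$\sum_{\mathbf j\in\zz^d}\int_{Q_{\mathbf j,0}}\sum_{\mathbf j'\in\zz^d}\lf[\fint_{Q_{\mathbf j',0}}\lf\|\mathbf{W}^{\frac1p}(x)\mathbf{W}^{-\frac1p}(y)\r\|^{p'}dy\r]^{\frac p{p'}}\frac{1}{[1+\rho(\mathbf j-\mathbf j')]^N}\,dx\lesssim 1$$
using the doubling property of order $p$ to control the local averages of $\|\mathbf{W}^{1/p}(x)\mathbf{W}^{-1/p}(y)\|^{p'}$ on comparable cubes, together with polynomial growth $\lesssim [1+\rho(\mathbf j-\mathbf j')]^{\beta}$ of the comparison constant between $\mathbf{W}$ on $Q_{\mathbf j,0}$ and $Q_{\mathbf j',0}$; summability then follows from $N>d+\beta$. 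Finally, summing in $\mathbf j$ and using that the cubes $Q_{\mathbf j',0}$ tile $\rr^d$ gives $\sum_{\mathbf j'}\int_{Q_{\mathbf j',0}}\|\mathbf{W}^{1/p}(y)\vec f(y)\|^p\,dy = \|\vec f\|_{L^p(\mathbf{W})}^p$.

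The main obstacle I anticipate is the third step: replacing the $A_p$ hypothesis, which directly provides the boundedness of the relevant averaging operator, with only the doubling condition of order $p$ with exponent $\beta<p$. One needs to verify that doubling of order $p$ is strong enough to yield the global summation bound — this is a genuine point where one must invoke a quantitative consequence of the doubling condition (polynomial control on how fast $\int_{Q_{\mathbf j',0}}\|\mathbf{W}^{1/p}(\cdot)y\|^p$ can grow as $\mathbf j'$ moves away from $\mathbf j$, with exponent $\beta$), rather than a routine estimate. I would isolate this as a preliminary sublemma: for a matrix weight doubling of order $p$ with exponent $\beta$, one has $\int_{Q_{\mathbf j,0}}\|\mathbf{W}^{1/p}(x)\mathbf{W}^{-1/p}(y)\|^{p'}\,dy \lesssim [1+\rho(\mathbf j-\mathbf j')]^{\beta p'/p}$ for $y\in Q_{\mathbf j',0}$ (or an $L^{p'}$-averaged version thereof), which feeds into the sum above and makes $N>d+\beta p/p'$ — precisely the condition already imposed on $N$ in the previous proof — sufficient for convergence. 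Once that sublemma is in place, the remaining bookkeeping is identical to the preceding lemma.
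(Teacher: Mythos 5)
The paper states this lemma without proof (it is the doubling-weight companion of the preceding $A_p$ lemma, with the authors implicitly deferring to Roudenko \cite{r03}), so your attempt has to stand on its own --- and it has a genuine gap at exactly the point you flag as the ``main obstacle.'' Your plan is to rerun the $A_p$ proof: split $\|\mathbf{W}^{1/p}(x)\vec f(y)\|\le\|\mathbf{W}^{1/p}(x)\mathbf{W}^{-1/p}(y)\|\,\|\mathbf{W}^{1/p}(y)\vec f(y)\|$ and then control the averages $\int_{Q_{\mathbf j',0}}\|\mathbf{W}^{1/p}(x)\mathbf{W}^{-1/p}(y)\|^{p'}\,dy$ polynomially in $\rho(\mathbf j-\mathbf j')$ using doubling. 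This cannot work: the doubling condition of order $p$ constrains only the quantities $\int_B\|\mathbf{W}^{1/p}(x)\vec u\|^p\,dx$ and says nothing about $\mathbf{W}^{-1/p}$; for a doubling weight that is not in $A_p$ the function $y\mapsto\|\mathbf{W}^{1/p}(x)\mathbf{W}^{-1/p}(y)\|^{p'}$ need not even be locally integrable. In fact the uniform boundedness of precisely these averages is one of the equivalent formulations of the $A_p$ condition recorded in the lemma at the end of Section 2.2, so your proposed sublemma is the $A_p$ hypothesis in disguise, and the argument would establish the doubling case only by silently assuming $A_p$. The preliminary ``extrapolation'' claim is also false: doubling of order $p$ with exponent $\beta<p$ does not place $\mathbf{W}$ in any Muckenhoupt class --- already for scalar weights there exist doubling weights outside $A_\infty$.

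The workable route (Roudenko's) never separates $\mathbf{W}^{-1/p}$ from the weight. From $\vec f=\vec f*\phi$ one applies H\"older directly against the finite measure $[1+\rho(\mathbf j-y)]^{-N}\,dy$ to get $\|\mathbf{W}^{1/p}(x)\vec f(\mathbf j)\|^p\lesssim\int_{\mathbb{R}^d}\|\mathbf{W}^{1/p}(x)\vec f(y)\|^p[1+\rho(\mathbf j-y)]^{-N}\,dy$, then integrates in $x$ over $Q_{\mathbf j,0}$, sums in $\mathbf j$, interchanges, and uses the \emph{iterated} doubling inequality for the fixed vector $\vec v=\vec f(y)$ to move $\int_{Q_{\mathbf j,0}}\|\mathbf{W}^{1/p}(x)\vec v\|^p\,dx$ to the cube containing $y$ at the cost of a factor $[1+\rho(\mathbf j-\mathbf j')]^{c\beta}$; this is where $\beta$ genuinely enters and where $N$ must dominate $1+c\beta$ (note that in the anisotropic setting the summability threshold for $\sum_{\mathbf j}[1+\rho(\mathbf j)]^{-N}$ is $N>1$ rather than $N>d$, since $\rho$ is $|\det A|$-homogeneous). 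One then lands on the doubly averaged quantity $\sum_{\mathbf j'}\int_{Q_{\mathbf j',0}}\int_{Q_{\mathbf j',0}}\|\mathbf{W}^{1/p}(x)\vec f(y)\|^p\,dx\,dy$, i.e.\ $\sum_{\mathbf j'}\int_{Q_{\mathbf j',0}}\|\mathcal{A}_{Q_{\mathbf j',0}}\vec f(y)\|^p\,dy$ in terms of reducing operators, and the remaining task is to compare this with $\|\vec f\|^p_{L^p(\mathbf{W})}$. That comparison is the genuinely matrix-valued step, it is where the hypothesis $\beta<p$ is actually used, and your outline does not confront it at all (it is bypassed only because $A_p$ was smuggled in). You should either carry out this step explicitly or cite the corresponding lemma of \cite{r03}.
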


\begin{thm}\label{t3.3}
Let $\alpha\in\rr$, $q\in(0,\fz]$, $p\in[1,\fz)$, $\mathbf{W}\in A_p$.
Then for any $f\in\dot{B}_{p,A}^{\alpha,q}(\rr^d,\mathbf{W})$, we have
$$\lf\|\{\vec{s}_Q\}_Q\r\|_{\dot{b}_{p,A}^{\alpha,q}(\mathbf{W})}\lesssim
\lf\|\vec{f}\,\r\|_{\dot{B}_{p,A}^{\alpha,q}(\rr^d,\mathbf{W})},$$
where $\vec{s}_Q=S_{\varphi}(\vec{f})=\langle\vec{f},\varphi_Q\rangle
=\lf(\langle f_1,\varphi_Q\rangle,\langle f_2,\varphi_Q\rangle,\ldots,\langle f_m,\varphi_Q\rangle\r)^T.$
\end{thm}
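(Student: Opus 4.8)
The plan is to establish the $\varphi$-transform bound (Theorem \ref{t3.3}) by the standard duality route: first reduce the claim to a pointwise (almost-orthogonality) estimate relating the coefficients $\vec{s}_Q = \langle \vec f, \varphi_Q\rangle$ to the Littlewood--Paley pieces $\vec f * \varphi_k$, then sum up in the weighted sequence norm. Concretely, for a cube $Q \in \mathcal Q$ with $|Q| = |\det A|^{-k}$, I would write $\langle \vec f, \varphi_Q\rangle = |Q|^{1/2}(\vec f * \tilde\varphi_k)(x_Q)$ (up to a reflection $\tilde\varphi$), and use the support conditions on $\hat\varphi$ together with \eqref{test2} to see that only finitely many scales $|k - k'|\le \sigma$ interact, so that $\vec f * \tilde\varphi_k$ is itself (a finite sum of) band-limited functions built from $\vec f * \varphi_{k'}$. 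This puts us exactly in the setting of the sampling Lemma (the one stating $f*g(x) = \sum_{\mathbf j}|\det A|^{-k} f(A^{-k}\mathbf j)g(x - A^{-k}\mathbf j)$ for band-limited $f,g$), so the discrete $\ell^p(\mathbf W)$-norm of $\{|Q|^{-1/2}\vec s_Q\}$ over cubes of a fixed generation $k$ is controlled by $\|\vec f * \varphi_{k}\|_{L^p(\mathbf W)}$ up to a constant, exactly via the maximal-operator-type Lemma for functions in $E_k$ (rescaled from $E_0$ by the dilation $A^k$).

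The key steps, in order, are: (1) express $\vec s_Q$ as a sample value of a band-limited function and, using the Calderón-type reproducing identity \eqref{test2}, reduce to estimating $\sum_{\mathbf j \in \zz^d}\int_{Q_{\mathbf j,k}}\|\mathbf W^{1/p}(x)\,(\vec f * \varphi_k)(A^{-k}\mathbf j)\|^p\,dx$ by $\|\vec f * \varphi_k\|_{L^p(\mathbf W)}^p$, for each $k$; (2) invoke the dilation-invariance of the matrix $A_p$ (and doubling) class — note $\mathbf W \in A_p$ iff $\mathbf W(A^{-k}\cdot) \in A_p$ with the same constant — to transfer the maximal inequality Lemma from $E_0$ to $E_k$; (3) multiply through by $|\det A|^{k\alpha q}$ and take the $\ell^q$ sum over $k \in \zz$, which is immediate once the per-scale bound is uniform in $k$. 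Comparing definitions, $\bigl\|\sum_{|Q|=|\det A|^{-k}}|Q|^{-1/2}|\vec s_Q|\mathbf 1_Q\bigr\|_{L^p(\mathbf W)} \sim \bigl(\sum_{\mathbf j}\int_{Q_{\mathbf j,k}}\|\mathbf W^{1/p}(x)(\vec f * \varphi_k)(x_{Q_{\mathbf j,k}})\|^p dx\bigr)^{1/p}$, so Steps (1)--(2) give $\|\{\vec s_Q\}_Q\|_{\dot b_{p,A}^{\alpha,q}(\mathbf W)} \lesssim \|\vec f\|_{\dot B_{p,A}^{\alpha,q}(\rr^d,\mathbf W)}$.

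The main obstacle I anticipate is controlling the replacement of the \emph{sampled} value $(\vec f*\varphi_k)(x_{Q_{\mathbf j,k}})$ by the genuine $L^p(\mathbf W)$ size of $\vec f*\varphi_k$ on the cube $Q_{\mathbf j,k}$: since the weight $\mathbf W$ is only doubling (or $A_p$) and not, say, continuous, one cannot simply compare $\|\mathbf W^{1/p}(x)v\|$ for $x$ ranging over $Q_{\mathbf j,k}$ with its value at the corner. This is precisely where the two sampling/maximal Lemmas are designed to help — the band-limitedness of $\vec f * \varphi_k$ lets one write its value at a point as a rapidly decaying average against nearby values, and then the discrete Hölder argument plus the $A_p$ (or doubling-of-order-$p$) condition on averages of $\|\mathbf W^{1/p}(x)\mathbf W^{-1/p}(y)\|^{p'}$ absorbs the weight transfer, exactly as in the proof of the maximal-operator Lemma. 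A secondary technical point is justifying the finite-overlap reduction in $k$ from the Fourier-support conditions and checking that the reflected kernels $\tilde\varphi_k$ still satisfy the hypotheses of the sampling Lemma; both are routine but must be handled before the weighted estimate can be quoted. Once these are in place, the $\ell^q$-summation over $k$ is harmless because every constant produced is independent of $k$, by the dilation-invariance of $A_p$.
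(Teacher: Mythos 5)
Your proposal is correct and its core is the same as the paper's: write $\vec{s}_Q=|Q|^{1/2}(\vec{f}*\widetilde{\varphi}_k)(A^{-k}\mathbf{j})$, identify the generation-$k$ piece of the $\dot{b}^{\alpha,q}_{p,A}(\mathbf{W})$ norm with $\sum_{\mathbf{j}}\int_{Q_{\mathbf{j},k}}\|\mathbf{W}^{1/p}(x)(\vec{f}*\widetilde{\varphi}_k)(A^{-k}\mathbf{j})\|^p\,dx$, rescale by $A^k$ (replacing $\mathbf{W}$ by $\mathbf{W}(A^k\cdot)$, which preserves the anisotropic $A_p$/doubling constants) so that the sampling/maximal lemma for $E_0$ applies, and then take the $\ell^q$ sum over $k$ with constants uniform in $k$. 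The one place you diverge is the passage from $\widetilde{\varphi}$ back to $\varphi$: the paper does not use a finite-overlap/almost-orthogonality argument on the Littlewood--Paley pieces, but instead proves the equivalence $\dot{B}^{\alpha,q}_{p,A}(\rr^d,\mathbf{W},\widetilde{\varphi})=\dot{B}^{\alpha,q}_{p,A}(\rr^d,\mathbf{W},\varphi)$ by decomposing $\vec{f}=\sum_Q\langle\vec{f},\widetilde{\varphi}_Q\rangle\widetilde{\psi}_Q$, observing that $\{\widetilde{\psi}_Q\}$ is a family of smooth molecules, and invoking the molecular synthesis estimate (Lemma \ref{mole}) together with $\widetilde{\widetilde{\varphi}}=\varphi$. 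Your route requires instead a matrix-weighted convolution inequality $\|g*\eta_{k'}\|_{L^p(\mathbf{W})}\lesssim\|g\|_{L^p(\mathbf{W})}$ for band-limited $g$, which is not stated in the paper but is provable by the same discrete H\"older plus $A_p$-averaging technique used in the maximal lemma; it is self-contained and avoids the molecular machinery, whereas the paper's route reuses Lemma \ref{mole} at the cost of making the two halves of the $\varphi$-transform theorem interdependent. Either way the conclusion follows; just be sure to actually prove the weighted convolution bound if you take your route, since it is exactly where the matrix $A_p$ hypothesis enters a second time.
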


\begin{proof}
By the definition, we have
\begin{align*}
\lf\|\{\vec{s}_Q\}_Q\r\|_{\dot{b}_{p,A}^{\alpha,q}(\mathbf{W})}
=&\lf(\sum_{k\in\zz}|\det A|^{k\alpha q}\lf\|\sum_{Q\in\mathcal{Q},|Q|=|\det A|^{-k}}|Q|^{-\frac12}|\vec{s}_Q|\mathbf{1}_Q\r\|_{L^p(\mathbf{W})}^q\r)^{\frac1q}\\
=&\lf(\sum_{k\in\zz}|\det A|^{k\alpha q}\lf\|\sum_{Q\in\mathcal{Q},|Q|=|\det A|^{-k}}|Q|^{-\frac12}\lf\|\mathbf{W}^{\frac1p}(x)\vec{s}_Q\r\|\mathbf{1}_Q(x)\r\|_{L^p(dx)}^q\r)
^{\frac1q}.
%=&\|\{\mathrm{I}_k\}_k\|_{\ell^q_{\alpha}}.
\end{align*}
Fix $k\in\zz$, let $Q_{\mathbf{j},k}=A^{-k}([0,1]^d+\mathbf{j})$, $\mathbf{j}\in\zz^d$. Then we have $$\vec{s}_Q=\lf\langle\vec{f},\varphi_Q\r\rangle=|Q|^{\frac12}\lf(\vec{f}*\widetilde{\varphi}_k\r)
(A^{-k}\mathbf{j})$$ and
\begin{align*}
\lf\|\sum_{Q\in\mathcal{Q},|Q||\det A|^{-k}}|Q|^{-\frac p2}\lf\|\mathbf{W}^{\frac1p}(x)\vec{s}_Q\r\|\mathbf{1}_Q(x)\r\|_{L^p(dx)}^p
=&\sum_{Q\in\mathcal{Q},|Q|=|\det A|^{-k}}|Q|^{-\frac p2}\int_Q\lf\|\mathbf{W}^{\frac1p}(x)|Q|^{\frac12}(\vec{f}*\widetilde{\varphi}_k)(A^{-k}\mathbf{j})
\r\|^p\,dx\\
=&\sum_{\mathbf{j}\in\zz^d}\int_{Q_{\mathbf{j},k}}\lf\|\mathbf{W}^{\frac1p}(x)(\vec{f}*
\widetilde{\varphi}_k)(A^{-k}\mathbf{j})\r\|^p\,dx.
\end{align*}
Set $\vec{f}_k(x):=\vec{f}(A^{-k}x)$ and $\mathbf{W}_k(x):=\mathbf{W}(A^kx)$, then $\vec{f}*\widetilde{\varphi}_k(A^{-k}\mathbf{j})=\vec{f}_k*\widetilde{\varphi}(\mathbf{j})$. Therefore,
\begin{align*}
\lf\|\sum_{Q\in\mathcal{Q},|Q|=|\det A|^{-k}}|Q|^{-\frac p2}\lf\|\mathbf{W}^{\frac1p}(x)\vec{s}_Q\r\|\mathbf{1}_Q(x)\r\|_{L^p(dx)}^p
=&|\det A|^{-k}\sum_{\mathbf{j}\in\zz^d}\int_{Q_{\mathbf{j},0}}\lf\|\mathbf{W}_k^{\frac1p}(x)(\vec{f}_k*
\widetilde{\varphi})(\mathbf{j})\r\|^p\,dx\\
\lesssim&|\det A|^{-k}\int_{\rr^d}\lf\|\mathbf{W}^{\frac1p}(x)(\vec{f}*
\widetilde{\varphi}_k)(x)\r\|^p\,dx\\
\thicksim&\int_{\rr^d}\lf\|\mathbf{W}_k^{\frac1p}(x)(\vec{f}_k*
\widetilde{\varphi}_k)(x)\r\|^p\,dx\\
\thicksim&\lf\|\vec{f}_k*
\widetilde{\varphi}_k\r\|_{L^p(\mathbf{W})}^p.
\end{align*}
From the above estimates, we conclude that
\begin{align*}
\lf\|\{\vec{s}_Q\}_Q\r\|_{\dot{b}_{p,A}^{\alpha,q}(\mathbf{W})}
\lesssim&\lf(\sum_{k\in\zz}|\det A|^{k\alpha q}\lf\|\vec{f}_k*
\widetilde{\varphi}_k\r\|_{L^p(\mathbf{W})}^q\r)^{\frac1q}\\
\thicksim&\|f\|_{\dot{B}_{p,A}^{\alpha,q}(\rr^d,\mathbf{W})}.
\end{align*}
Finally, we only need to show the equivalence of  $\dot{B}_{p,A}^{\alpha,q}(\rr^d,\mathbf{W},\varphi)=\dot{B}_{p,A}^{\alpha,q}(\rr^d,\mathbf{W},\widetilde{\varphi})$.
It is easy to see that $\hat{\widetilde{\varphi}}=\overline{\hat{\varphi}}$ and $\hat{\widetilde{\psi}}=\overline{\hat{\psi}}$. Therefore, we know that
$(\widetilde{\varphi},\widetilde{\psi})$ satisfies \eqref{test2}.
From the decomposition of
$$\vec{f}=\sum_{Q\in\mathcal{Q}}\lf\langle \vec{f},\widetilde{\varphi}_Q\r\rangle\widetilde{\psi}_Q,$$
the fact that $\{\widetilde{\psi}_Q\}_{Q\in\mathcal{Q}}$ is a family of smooth molecules for $\dot{B}_{p,A}^{\alpha,q}(\rr^d,\mathbf{W})$, and Lemma \ref{mole}, we conclude that
$$\lf\|\vec{f}\,\r\|_{\dot{B}_{p,A}^{\alpha,q}(\rr^d,\mathbf{W},\widetilde{\varphi})}
\lesssim\lf\|\lf\{\lf\langle \vec{f},\widetilde{\varphi}_Q\r\rangle\r\}_{Q\in\mathcal{Q}}\r\|_
{\dot{b}_{p,A}^{\alpha,q}(\mathbf{W})}.$$
By this and the above estimate, we obtain
$$\lf\|\vec{f}\,\r\|_{\dot{B}_{p,A}^{\alpha,q}(\rr^d,\mathbf{W},\widetilde{\varphi})}
\lesssim\lf\|\vec{f}\,\r\|_{\dot{B}_{p,A}^{\alpha,q}(\rr^d,\mathbf{W},
\widetilde{\widetilde{\varphi}})}
\thicksim
\lf\|\vec{f}\,\r\|_{\dot{B}_{p,A}^{\alpha,q}(\rr^d,\mathbf{W},\varphi)}.$$
Therefore, we have
$$\lf\|\lf\{\lf\langle \vec{f},\widetilde{\varphi}_Q\r\rangle\r\}_{Q\in\mathcal{Q}}\r\|_
{\dot{b}_{p,A}^{\alpha,q}(\mathbf{W})}
=\lf\|\lf\{\vec{s}_Q\r\}_{Q\in\mathcal{Q}}\r\|_
{\dot{b}_{p,A}^{\alpha,q}(\mathbf{W})}
\lesssim\lf\|\vec{f}\,\r\|_{\dot{B}_{p,A}^{\alpha,q}(\rr^d,\mathbf{W},\varphi)}.$$
This completes our proof.

\end{proof}

\begin{remark}{\rm
Notice that, if $(\varphi',\psi')$, $(\varphi'',\psi'')$ satisfy the condition \eqref{test2},
$\alpha\in\rr$, $q\in(0,\fz]$, $p\in[1,\fz)$ and $\mathbf{W}\in A_p$, then
$$\dot{B}_{p,A}^{\alpha,q}(\rr^d,\mathbf{W},\varphi')
=\dot{B}_{p,A}^{\alpha,q}(\rr^d,\mathbf{W},\varphi'').$$
Indeed,
Let $(\varphi',\psi')$, $(\varphi'',\psi'')$ satisfy \eqref{test2}. We can decompose $\vec{f}$
as the following:
$$\vec{f}=\sum_{Q\in\mathcal{Q}}\lf\langle \vec{f},\varphi''_Q\r\rangle\psi_Q''
=\sum_{Q\in\mathcal{Q}}\vec{s}_Q''\psi_Q''.$$
From the fact that, for any $Q\in\mathcal{Q}$,  $\psi_Q''$ is a molecule, Lemma \ref{mole} and Theorem \ref{t3.3}, we conclude that
$$\lf\|\vec{f}\,\r\|_{\dot{B}_{p,A}^{\alpha,q}(\rr^d,\mathbf{W},\varphi')}
\lesssim\lf\|\lf\{\vec{s}_Q''\r\}_{Q\in\mathcal{Q}}\r\|_{\dot{B}_{p,A}^
{\alpha,q}(\rr^d,\mathbf{W})}
\lesssim\lf\|\vec{f}\,\r\|_{\dot{B}_{p,A}^{\alpha,q}(\rr^d,\mathbf{W},\varphi'')}.$$
Reversing the role of $\varphi'$ and $\varphi''$, we obtain the equivalence of $\dot{B}_{p,A}^{\alpha,q}(\rr^d,\mathbf{W},\varphi')$ and $\dot{B}_{p,A}^{\alpha,q}(\rr^d,\mathbf{W},\varphi'')$.
Therefore, we know that the definition of $\dot{B}_{p,A}^{\alpha,q}(\rr^d,\mathbf{W})$ is independent of the choice of the test function $\varphi$.
}
\end{remark}

\subsection{The relationship with reducing operators}

In this subsection, we will establish the relation of Besov spaces and  reducing operators.

Recall that, for any matrix weight $\mathbf{W}$, we know that there exist a sequence of
reducing operators $\{\mathcal{A}_Q\}_{Q\in\mathcal{Q}}$ such that, for any $\vec{u}\in\mathcal{H}$,
$$\omega_{p,Q}=\lf[\fint_Q\lf\|\mathbf{W}^{\frac1p}(x)\vec{u}\r\|_{\mathcal{H}}\mathbf{1}_Q(x)\,dx\r]
^{\frac1p}.$$

\begin{lemma}
Let $\{\mathcal{A}_Q\}_{Q\in\mathcal{Q}}$ be the reducing operators for $\mathbf{W}$, $\alpha\in\rr$, $q\in(0,\fz]$, $p\in[1,\fz)$.
Then  we have
$$\lf\|\{\vec{s}_Q\}_Q\r\|_{\dot{b}_{p,A}^{\alpha,q}(\rr^d,\mathbf{W})}=
\lf\|\{\vec{s}_Q\}_Q\r\|_{\dot{b}_{p,A}^{\alpha,q}(\{\mathcal{A}_Q\})}.$$

\end{lemma}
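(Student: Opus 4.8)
The plan is to unwind both norms to a common pointwise expression and observe that the reducing operators $\{\mathcal{A}_Q\}$ are designed precisely to make the substitution $\|\mathbf{W}^{1/p}(x)\vec{u}\|_{\mathcal{H}} \rightsquigarrow \|\mathcal{A}_Q\vec{u}\|_{\mathcal{H}}$ cost only a constant, uniformly in $Q$. Recall the definition
$$\|\vec{s}\|_{\dot{b}_{p,A}^{\alpha,q}(\mathbf{W})} = \lf(\sum_{k\in\zz}|\det A|^{k\alpha q}\lf\|\sum_{|Q|=|\det A|^{-k}}|Q|^{-\frac12}\lf\|\mathbf{W}^{\frac1p}(x)\vec{s}_Q\r\|\mathbf{1}_Q(x)\r\|_{L^p(dx)}^q\r)^{\frac1q},$$
and since for fixed $k$ the cubes $Q$ with $|Q|=|\det A|^{-k}$ are pairwise disjoint and tile $\rr^d$, the inner $L^p(dx)$-norm raised to the $p$-th power splits as $\sum_{|Q|=|\det A|^{-k}}|Q|^{-p/2}\int_Q\|\mathbf{W}^{1/p}(x)\vec{s}_Q\|^p\,dx$.

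The first step is to rewrite $\int_Q\|\mathbf{W}^{1/p}(x)\vec{s}_Q\|^p\,dx = |Q|\cdot\fint_Q\|\mathbf{W}^{1/p}(x)\vec{s}_Q\|^p\,dx$ and recognize the average as (a power of) $\omega_{p,Q}(\vec{s}_Q)$ in the notation introduced before the lemma. The second step is to invoke the defining property of the reducing operator $\mathcal{A}_Q$: there is an absolute constant $c$ (depending only on $p$ and $d$, via the dimension of $\mathcal{H}$) with
$$c^{-1}\|\mathcal{A}_Q\vec{u}\|_{\mathcal{H}} \le \omega_{p,Q}(\vec{u}) \le c\,\|\mathcal{A}_Q\vec{u}\|_{\mathcal{H}} \qquad\text{for all }\vec{u}\in\mathcal{H}.$$
Substituting $\vec{u}=\vec{s}_Q$ and reassembling, each factor $\int_Q\|\mathbf{W}^{1/p}(x)\vec{s}_Q\|^p\,dx$ is comparable, with constants independent of $Q$ and $k$, to $\int_Q\|\mathcal{A}_Q\vec{s}_Q\|^p\,dx$, which is exactly what appears when one expands $\|\vec{s}\|_{\dot{b}_{p,A}^{\alpha,q}(\{\mathcal{A}_Q\})}$. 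Carrying the uniform constant through the $\ell^q$ sum over $k$ (harmlessly, since it is the same constant at every scale) gives the two-sided estimate; if one wants literal equality of norms rather than equivalence, it holds after the standard renormalization of $\mathcal{A}_Q$, or one simply states the lemma as $\sim$.

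The main thing to be careful about — really the only obstacle — is justifying the uniform comparability constant in the reducing-operator inequality: it must not depend on $Q$, only on $p$ and $m=\dim\mathcal{H}$. This is the standard fact that on a finite-dimensional Hilbert space the $L^p$-average of a family of Hilbert-space norms $\vec{u}\mapsto\|\mathbf{W}^{1/p}(x)\vec{u}\|$ over $x\in Q$ is equivalent to a single norm $\vec{u}\mapsto\|\mathcal{A}_Q\vec{u}\|$ induced by a positive-definite matrix $\mathcal{A}_Q$, with John-ellipsoid-type constants depending only on $m$; this is exactly the construction recalled at the start of the subsection and used by Roudenko \cite{r03} and Frazier--Roudenko \cite{fr04}, and nothing in the anisotropic passage from balls to dilated cubes $Q\in\mathcal{Q}$ affects it. Once that is in hand, the rest is the bookkeeping above: disjointness of same-scale cubes, pulling $|Q|$ in and out of the average, and the monotone behavior of $t\mapsto t^{q/p}$ together with $\ell^q$-quasi-norm manipulations, all of which are routine.
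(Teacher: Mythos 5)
Your proposal is correct and follows essentially the same route as the paper's proof: decompose the $L^p$-norm over the disjoint same-scale cubes, identify the average $\fint_Q\|\mathbf{W}^{1/p}(x)\vec{s}_Q\|^p\,dx$ with $(\omega_{p,Q}(\vec{s}_Q))^p$, and replace $\omega_{p,Q}$ by $\|\mathcal{A}_Q\cdot\|_{\mathcal{H}}$ via the uniform two-sided reducing-operator comparison before reassembling. Your remark that the conclusion is really an equivalence $\sim$ rather than a literal equality matches what the paper's own computation actually delivers.
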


\begin{proof}
By the definition, we have

\begin{align*}
\lf\|\{\vec{s}_Q\}_Q\r\|_{\dot{b}_{p,A}^{\alpha,q}(\rr^d,\mathbf{W})}
=&\lf[\sum_{k\in\zz}|\det A|^{kq\alpha}\lf\|\sum_{Q\in\mathcal{Q},|Q|=|\det A|^{-k}}|Q|^{-\frac 12}\lf\|\mathbf{W}^{\frac1p}(x)\vec{s}_Q\r\|\mathbf{1}_Q(x)\r\|_{L^p(dx)}^q\r]^{\frac1q}\\
=&\lf[\sum_{k\in\zz}|\det A|^{kq\alpha}\lf[\sum_{Q\in\mathcal{Q},|Q|=|\det A|^{-k}}|Q|^{1-\frac p2}\lf(\omega_{p,Q}(\vec{s}_Q)\r)^p\r]^{\frac qp}\r]^{1q}\\
\thicksim&\lf[\sum_{k\in\zz}|\det A|^{kq\alpha}\lf(\sum_{Q\in\mathcal{Q},|Q|=|\det A|^{-k}}|Q|^{-\frac p2}\lf\|\mathcal{A}_{Q}\vec{s}_Q\r\|_{\mathcal{H}}^p\int_Q\mathbf{1}(x)\,dx\r)^{\frac qp}\r]^{1q}\\
\thicksim&\lf[\sum_{k\in\zz}|\det A|^{kq\alpha}\lf\|\sum_{Q\in\mathcal{Q},|Q|=|\det A|^{-k}}|Q|^{-\frac 12}\lf\|\mathcal{A}_{Q}\vec{s}_Q\r\|\mathbf{1}_Q(x)\r\|_{L^p(dx)}^q\r]^{\frac1q}\\
\thicksim&
\lf\|\{\vec{s}_Q\}_Q\r\|_{\dot{b}_{p,A}^{\alpha,q}(\{\mathcal{A}_Q\})}.
\end{align*}
\end{proof}

\begin{thm}
Let $\alpha\in\rr$, $q\in(0,\fz]$, $p\in[1,\fz)$ and $\mathbf{W}\in A_p$. Then the Besov space
$\dot{B}_{p,A}^{\alpha,q}(\rr^d,\mathbf{W})$ is complete.

\end{thm}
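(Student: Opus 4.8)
The plan is to deduce completeness of $\dot{B}_{p,A}^{\alpha,q}(\rr^d,\mathbf{W})$ from completeness of the sequence space $\dot{b}_{p,A}^{\alpha,q}(\mathbf{W})$ together with the boundedness of the $\varphi$-transform $S_\varphi$ (Theorem \ref{t3.3}) and its inverse $T_\psi$ (Lemma \ref{mole}). Concretely, I would first observe that the identity $T_\psi\circ S_\varphi=\mathrm{id}$ on $\dot{B}_{p,A}^{\alpha,q}(\rr^d,\mathbf{W})$, supplied by the $\varphi$-transform decomposition $\vec f=\sum_Q\langle\vec f,\varphi_Q\rangle\psi_Q$, exhibits $\dot{B}_{p,A}^{\alpha,q}(\rr^d,\mathbf{W})$ as a retract of $\dot{b}_{p,A}^{\alpha,q}(\mathbf{W})$: $S_\varphi$ is bounded into the sequence space and $T_\psi$ is a bounded left inverse. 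Since a retract of a complete (quasi-)Banach space is complete, it then suffices to prove that $\dot{b}_{p,A}^{\alpha,q}(\mathbf{W})$ is complete.

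For the completeness of $\dot{b}_{p,A}^{\alpha,q}(\mathbf{W})$, I would pass through the reducing-operator description from the previous lemma: $\|\{\vec s_Q\}\|_{\dot b_{p,A}^{\alpha,q}(\mathbf{W})}\sim\|\{\vec s_Q\}\|_{\dot b_{p,A}^{\alpha,q}(\{\mathcal A_Q\})}$, where the right-hand side is, for each fixed scale $k$, a weighted $\ell_p$-sum of the quantities $|Q|^{\alpha/\dots}\,\|\mathcal A_Q\vec s_Q\|_{\mathcal H}$, and then an $\ell_q$-sum over $k$. Because the $\mathcal A_Q$ are fixed invertible operators on the finite-dimensional space $\mathcal H$, this norm is (equivalent to) a genuine mixed-norm $\ell_q(\ell_p)$ norm on $\mathcal H$-valued sequences indexed by $\mathcal Q$, which is a standard complete space. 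Given a Cauchy sequence $\{\vec s^{(n)}\}_n$ in $\dot b_{p,A}^{\alpha,q}(\mathbf{W})$, I would extract the coordinatewise limit $\vec s_Q:=\lim_n\vec s_Q^{(n)}$ (convergence of each coordinate follows since the norm controls each single $|Q|^{\text{weight}}\|\mathcal A_Q\vec s_Q\|_{\mathcal H}$), and then a routine Fatou/truncation argument over finite subsets of $\mathcal Q$ shows $\vec s\in\dot b_{p,A}^{\alpha,q}(\mathbf{W})$ and $\vec s^{(n)}\to\vec s$ in norm.

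Transporting back: given a Cauchy sequence $\{\vec f^{(n)}\}$ in $\dot{B}_{p,A}^{\alpha,q}(\rr^d,\mathbf{W})$, the sequences $\{S_\varphi\vec f^{(n)}\}$ are Cauchy in $\dot b_{p,A}^{\alpha,q}(\mathbf{W})$ by boundedness of $S_\varphi$, hence converge to some $\vec s$; set $\vec f:=T_\psi\vec s\in\dot{B}_{p,A}^{\alpha,q}(\rr^d,\mathbf{W})$ by boundedness of $T_\psi$. Then $\|\vec f^{(n)}-\vec f\|_{\dot{B}}=\|T_\psi S_\varphi\vec f^{(n)}-T_\psi\vec s\|_{\dot B}\lesssim\|S_\varphi\vec f^{(n)}-\vec s\|_{\dot b}\to0$, using $T_\psi S_\varphi=\mathrm{id}$. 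One small point I would be careful about, since $\cs'/\mathcal P$ enters, is that the limit is well-defined as an element of $\cs'/\mathcal P$: this is exactly what $T_\psi$ delivers, because $T_\psi\vec s=\sum_Q\vec s_Q\psi_Q$ converges in $\cs'/\mathcal P$ whenever $\vec s\in\dot b_{p,A}^{\alpha,q}(\mathbf{W})$ (the same estimate that underlies Lemma \ref{mole}).

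The main obstacle, and the only place real work is needed, is justifying that $T_\psi s$ converges in $\cs'/\mathcal P$ for every $s\in\dot b_{p,A}^{\alpha,q}(\mathbf{W})$ and that the pairing with a fixed Schwartz function is controlled by the sequence norm — i.e., that $\dot{B}_{p,A}^{\alpha,q}(\rr^d,\mathbf{W})$ embeds continuously into $\cs'/\mathcal P$ so that norm-convergence of the $\vec f^{(n)}$ and their coordinate data are compatible. This is the anisotropic, matrix-weighted analogue of the classical fact that Besov-space convergence implies tempered-distribution convergence; it follows by estimating $|\langle T_\psi s,g\rangle|$ for $g\in\cs(\rr^d)$ with $\hat g$ supported away from the origin, splitting the sum over $Q$ by scale, using the rapid decay of $\psi$, the dilated-ball geometry \eqref{e2.3}, \eqref{e2.3x}, and the $A_p$ (or doubling) comparability of $\mathbf W^{1/p}$ on nearby cubes to pass from the unweighted $\ell_p$ of $\langle\vec f,\varphi_Q\rangle$ to the weighted norm, exactly as in the scalar case \cite{r03}.
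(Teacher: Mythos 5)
Your proposal is correct and follows essentially the same route as the paper: pass to the sequence space via the bounded $\varphi$-transform, use the reducing-operator equivalence and invertibility of the $\mathcal{A}_Q$ to extract coordinatewise limits $\vec{s}_Q$, define the limit as $T_\psi\vec{s}$, and conclude by the boundedness of $T_\psi$ (the molecular lemma) together with Fatou. The only difference is presentational — you isolate the completeness of $\dot{b}_{p,A}^{\alpha,q}(\mathbf{W})$ as a separate step and explicitly flag the convergence of $T_\psi\vec{s}$ in $\cs'/\mathcal{P}$, a point the paper's proof leaves implicit.
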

\begin{proof}
Let $\{\vec{f}_i\}_{i\in\nn}$ be a Cauchy sequence in $\dot{B}_{p,A}^{\alpha,q}(\rr^d,\mathbf{W})$. From the above lemma, we know that
$\{\{\vec{s}_Q(\vec{f}_i)\}_Q\}_{i\in\nn}$ be a Cauchy sequence in $\dot{b}_{p,A}^{\alpha,q}(\{\mathcal{A}_Q\})$. Therefore, we have
\begin{align*}
&\lf\|\sum_{Q\in\mathcal{Q},|Q|=|\det A|^{-k}}|Q|^{-\frac 12}\lf\|\mathcal{A}_{Q}\lf(\vec{s}_Q(\vec{f}_i)-\vec{s}_Q(\vec{f}_l)\r)\r\|_{\mathcal{H}}\mathbf{1}_Q(x)
\r\|_{L^p(dx)}^p\\
=&|\det A|^{(\frac p2-1)k}\sum_{Q\in\mathcal{Q},|Q|=|\det A|^{-k}}\lf\|\mathcal{A}_{Q}\lf(\vec{s}_Q(\vec{f}_i)-\vec{s}_Q(\vec{f}_l)\r)\r\|
_{\mathcal{H}}^p\\
&\rightarrow 0, \ \ \mathrm{as} \ \ i,l\rightarrow\fz,
\end{align*}
which implies that, for any $Q\in\mathcal{Q}$,
$$\lf\|\mathcal{A}_{Q}\lf(\vec{s}_Q(\vec{f}_i)-\vec{s}_Q(\vec{f}_l)\r)\r\|
_{\mathcal{H}}^p
\rightarrow 0, \ \ \mathrm{as} \ \ i,l\rightarrow\fz.$$
By the fact that $\mathcal{A}_{Q}$ is invertible, we obtain that, for any $Q\in\mathcal{Q}$,  $\{\vec{s}_Q(\vec{f}_i)\}_{i\in\nn}$ is a vector-valued Cauchy sequence in $\mathcal{H}$.
Furthermore, we define  $\vec{s}_Q=\lim_{i\rightarrow\fz}\vec{s}_Q(\vec{f}_i)$.
Let
$$\vec{f}=\sum_{q\in\mathcal{Q}}\vec{s}_Q\psi_Q.$$ Then we have
\begin{align*}
\lf\|\vec{f}_i-\vec{f}\,\r\|_{\dot{B}_{p,A}^{\alpha,q}(\rr^d,\mathbf{W})}
=&\lf\|\sum_{Q\in\mathcal{Q}}\lf(\vec{s}_Q(\vec{f}_i)-\vec{s}_Q\r)\psi_Q\r\|
_{\dot{B}_{p,A}^{\alpha,q}(\rr^d,\mathbf{W})}\\
\lesssim&\lf\|\lf\{\vec{s}_Q(\vec{f}_i)-\vec{s}_Q\r\}_{Q\in\mathcal{Q}}\r\|
_{\dot{b}_{p,A}^{\alpha,q}(\mathbf{W})}\\
\lesssim&\liminf_{l\rightarrow\fz}\lf\|\lf\{\vec{s}_Q(\vec{f}_i)-
\vec{s}_Q(\vec{f}_l)\r\}_{Q\in\mathcal{Q}}\r\|
_{\dot{b}_{p,A}^{\alpha,q}(\{\mathcal{A}_Q\})}\rightarrow0,\ \  \mathrm{as} \ \ l\rightarrow\fz.
\end{align*}
From this, the Fatou Lemma and the fact that $\{\{\vec{s}_Q(\vec{f}_i)\}_{Q\in\mathcal{Q}}\}_{i\in\nn}$ is a Cauchy sequence in
$\dot{b}_{p,A}^{\alpha,q}(\{\mathcal{A}_Q\})$, we deduce that
$$\vec{f}=(\vec{f}+\vec{f}_i)-\vec{f}_i\in\dot{B}_{p,A}^{\alpha,q}(\rr^d,\mathbf{W}).$$
This implies that the Besov space $\dot{B}_{p,A}^{\alpha,q}(\rr^d,\mathbf{W})$ is complete.

\end{proof}

\begin{corollary}
Let $\alpha\in\rr$, $q\in(0,\fz]$, $p\in(1,\fz)$, and $\mathbf{W}$ be a matrix weight  with corresponding reducing operators $\mathcal{A}_Q$ and $\widetilde{\mathcal{A}}_Q$. Then we have
\begin{enumerate}
\item[\rm{(i)}]
$\dot{b}_{p,A}^{\alpha,q}(\{\mathcal{A}_Q\})\subseteq
\dot{b}_{p,A}^{\alpha,q}(\{(\widetilde{\mathcal{A}}_Q)^{-1}\})$;

\item[\rm{(ii)}] If $\omega\in A_p$,
$\dot{b}_{p,A}^{\alpha,q}(\{(\widetilde{\mathcal{A}}_Q)^{-1}\})
\subseteq\dot{b}_{p,A}^{\alpha,q}(\{\mathcal{A}_Q\}).$
\end{enumerate}

\end{corollary}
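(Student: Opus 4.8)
Throughout, $\{\widetilde{\mathcal{A}}_Q\}_{Q\in\mathcal{Q}}$ denotes the sequence of reducing operators associated with the dual matrix weight $\mathbf{W}^{-p'/p}$, so that each $\mathcal{A}_Q$ and each $\widetilde{\mathcal{A}}_Q$ may be taken positive definite (in particular invertible) with $\|\mathcal{A}_Q\vec{u}\|_{\mathcal{H}}\sim\omega_{p,Q}(\vec{u})$ and $\|\widetilde{\mathcal{A}}_Q\vec{u}\|_{\mathcal{H}}\sim\omega^*_{p',Q}(\vec{u})$ for every $\vec{u}\in\mathcal{H}$, the equivalence constants depending only on $p$ and $\dim\mathcal{H}$. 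The plan is to establish, for each fixed $Q$, pointwise operator inequalities relating $\mathcal{A}_Q$ and $(\widetilde{\mathcal{A}}_Q)^{-1}$ that are uniform in $Q$, and then to insert them term by term into the definition of the sequence spaces $\dot{b}_{p,A}^{\alpha,q}(\{\cdot\})$: since only the factor $\|(\cdot)_Q\vec{s}_Q\|_{\mathcal{H}}$ changes from one reducing family to the other, a uniform pointwise bound between the two families immediately yields the corresponding inclusion of sequence spaces.

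For (i) I would first record the elementary duality estimate, valid for \emph{any} matrix weight. Fixing $Q$, for all $\vec{u},\vec{v}\in\mathcal{H}$ one has $\langle\vec{u},\vec{v}\rangle_{\mathcal{H}}=\fint_Q\langle\mathbf{W}^{\frac1p}(x)\vec{u},\mathbf{W}^{-\frac1p}(x)\vec{v}\rangle_{\mathcal{H}}\,dx$, so the Cauchy--Schwarz inequality in $\mathcal{H}$ followed by H\"older's inequality in $x$ with exponents $p$ and $p'$ gives
$$\big|\langle\vec{u},\vec{v}\rangle_{\mathcal{H}}\big|\lesssim\big\|\mathcal{A}_Q\vec{u}\big\|_{\mathcal{H}}\,\big\|\widetilde{\mathcal{A}}_Q\vec{v}\big\|_{\mathcal{H}}.$$
Substituting $\vec{v}=(\widetilde{\mathcal{A}}_Q)^{-1}\vec{w}$, using that $\widetilde{\mathcal{A}}_Q$ is self-adjoint, and taking the supremum over $\vec{w}$ with $\|\vec{w}\|_{\mathcal{H}}\le1$ yields $\big\|(\widetilde{\mathcal{A}}_Q)^{-1}\vec{u}\big\|_{\mathcal{H}}\lesssim\big\|\mathcal{A}_Q\vec{u}\big\|_{\mathcal{H}}$ for all $\vec{u}\in\mathcal{H}$, with a constant independent of $Q$. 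Taking $\vec{u}=\vec{s}_Q$ for each $Q$ and comparing the definitions of the two sequence norms gives $\|\{\vec{s}_Q\}_Q\|_{\dot{b}_{p,A}^{\alpha,q}(\{(\widetilde{\mathcal{A}}_Q)^{-1}\})}\lesssim\|\{\vec{s}_Q\}_Q\|_{\dot{b}_{p,A}^{\alpha,q}(\{\mathcal{A}_Q\})}$, which is (i); note that no $A_p$ hypothesis is used here.

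For (ii) the $A_p$ hypothesis enters to reverse the inequality. In the form $\omega^*_{p',Q}\le C(\omega_{p,Q})^*$ (equivalently, the condition with the double average of $\|\mathbf{W}^{\frac1p}(x)\mathbf{W}^{-\frac1p}(y)\|$ in the $A_p$ lemma), and using $(\omega_{p,Q})^*(\vec{z})\sim\sup_{\vec{v}\ne0}\big|\langle\vec{z},\vec{v}\rangle_{\mathcal{H}}\big|/\|\mathcal{A}_Q\vec{v}\|_{\mathcal{H}}=\|(\mathcal{A}_Q)^{-1}\vec{z}\|_{\mathcal{H}}$, the assumption $\mathbf{W}\in A_p$ becomes $\|\widetilde{\mathcal{A}}_Q\vec{z}\|_{\mathcal{H}}\lesssim\|(\mathcal{A}_Q)^{-1}\vec{z}\|_{\mathcal{H}}$ for all $\vec{z}$, i.e. the uniform operator bound $\|\widetilde{\mathcal{A}}_Q\mathcal{A}_Q\|=\|\mathcal{A}_Q\widetilde{\mathcal{A}}_Q\|\lesssim1$ for all $Q\in\mathcal{Q}$. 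Hence for any $\vec{u}$, writing $\vec{u}=\widetilde{\mathcal{A}}_Q\big((\widetilde{\mathcal{A}}_Q)^{-1}\vec{u}\big)$ we get $\|\mathcal{A}_Q\vec{u}\|_{\mathcal{H}}=\|\mathcal{A}_Q\widetilde{\mathcal{A}}_Q(\widetilde{\mathcal{A}}_Q)^{-1}\vec{u}\|_{\mathcal{H}}\lesssim\|(\widetilde{\mathcal{A}}_Q)^{-1}\vec{u}\|_{\mathcal{H}}$, uniformly in $Q$; taking $\vec{u}=\vec{s}_Q$ and substituting into the sequence norms as before gives $\|\{\vec{s}_Q\}_Q\|_{\dot{b}_{p,A}^{\alpha,q}(\{\mathcal{A}_Q\})}\lesssim\|\{\vec{s}_Q\}_Q\|_{\dot{b}_{p,A}^{\alpha,q}(\{(\widetilde{\mathcal{A}}_Q)^{-1}\})}$, which is (ii). Combined with (i), this shows $\dot{b}_{p,A}^{\alpha,q}(\{\mathcal{A}_Q\})=\dot{b}_{p,A}^{\alpha,q}(\{(\widetilde{\mathcal{A}}_Q)^{-1}\})$ whenever $\mathbf{W}\in A_p$.

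The step I expect to be the real obstacle is making rigorous the passage from the $A_p$ condition, as stated in terms of averaged metrics, to the clean operator inequality $\|\mathcal{A}_Q\widetilde{\mathcal{A}}_Q\|\lesssim1$, together with the comparabilities $\|\mathcal{A}_Q\vec{u}\|_{\mathcal{H}}\sim\omega_{p,Q}(\vec{u})$, $\|\widetilde{\mathcal{A}}_Q\vec{u}\|_{\mathcal{H}}\sim\omega^*_{p',Q}(\vec{u})$ and $(\omega_{p,Q})^*\sim\|(\mathcal{A}_Q)^{-1}\cdot\|_{\mathcal{H}}$. These rest on the standard but somewhat delicate construction of reducing operators (for each $Q$ one produces a positive-definite matrix realizing the prescribed averaged norm up to a constant depending only on $\dim\mathcal{H}$) and on dualizing that comparability; once they are in place, everything else is the mechanical term-by-term substitution in the reducing-operator formula for $\|\cdot\|_{\dot{b}_{p,A}^{\alpha,q}}$, exactly as in the lemma immediately preceding this corollary.
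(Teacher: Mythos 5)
Your argument is correct, and it is the standard reducing-operator duality argument from Roudenko \cite{r03} that the paper implicitly relies on: the paper states this corollary with no proof at all, so your write-up supplies exactly the omitted details (the H\"older/duality bound $\|(\widetilde{\mathcal{A}}_Q)^{-1}\vec{u}\|\lesssim\|\mathcal{A}_Q\vec{u}\|$ for (i), and the uniform bound $\|\mathcal{A}_Q\widetilde{\mathcal{A}}_Q\|\lesssim1$ extracted from the $A_p$ condition for (ii), followed by term-by-term substitution into the sequence norms). The only caveats are notational: the paper never formally defines $\widetilde{\mathcal{A}}_Q$ and its displayed formula for $\omega_t^*$ contains a typo, so your reading of $\widetilde{\mathcal{A}}_Q$ as the reducing operators for the dual averaged norm $\omega^*_{p',Q}$ (and of ``$\omega\in A_p$'' as $\mathbf{W}\in A_p$) is the correct and intended one.
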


\section{Inhomogeneous matrix-weighted Besov spaces}\label{s4}

In this section, we will investgate the inhomogeneous Besov spaces. Before we define the vector-valued inhomogeneous Besov space $B^{\alpha,q}_{p,A}(\rr^d,\mathbf{W})$ with matrix weight $\mathbf{W}$, and briefly give some basic properties of $B^{\alpha,q}_{p,A}(\rr^d,\mathbf{W})$.
Most of them can be deduced by a straightforward modification of the homogeneous results, therefore, we only outline require changes.

Let $\Phi,\varphi\in\cs(\rr^d)$ and satisfy
\begin{align*}
\supp \hat{\Phi}\subset[-\pi,\pi]^d
\end{align*}
and
\begin{align*}
\sup_{k\geq1} \lf|\hat{\varphi}((A^*)^{-k}\xi)\r|>0, \ |\hat{\Phi}(\xi)|>0 \ \ \mathrm{for \ any} \ \xi\in\rr^d.
\end{align*}

In what follows, for $\varphi\in \cs(\rn)$, $k\in\zz$ with $k>0$, and $x\in\rn$, let $$\varphi_k(x):= |\det A|^{-k}\varphi(A^{-k}x).$$

\begin{definition}
Let $\alpha\in\rr$, $p,q\in(0,\fz]$. We define the matrix-weighted anisotropic inhomogeneous Besov space $B^{\alpha,q}_{p,A}(\rr^d,\mathbf{W})$ as the set of all $\vec{f}\in\cs'$ such that
$$\lf\|\vec{f}\,\r\|_{B^{\alpha,q}_{p,A}(\rr^d,\mathbf{W})}:=
\lf\|\vec{f}*\Phi\r\|_{L^p(\mathbf{W})}+
\lf(\sum_{k=1}^{\fz}|\det A|^{\alpha qk}\lf\|\vec{f}*\varphi_k\r\|_{L^p(\mathbf{W})}^q\r)^{\frac1q}<\fz.$$
\end{definition}

The corresponding inhomogeneous weighted sequence Besov space $b^{\alpha,q}_{p,A}(\mathbf{W})$ is
defined for the vector sequences enumerated by the dyadic cubes $Q$ with $|Q|\leq1$.
\begin{definition}
Let $\alpha\in\rr$, $p,q\in(0,\fz]$ and $\mathbf{W}$ be a matrix weight. We define the matrix-weighted sequence anisotropic inhomogeneous Besov space $b^{\alpha,q}_{p,A}(\mathbf{W})$ as the set of all all vector-valued sequences $\vec{s}=\{\vec{s}_Q\}_{Q\in\mathcal{Q},l(Q)\leq1}$ such that
$$\lf\|\vec{f}\,\r\|_{b^{\alpha,q}_{p,A}(\mathbf{W})}:=
\lf(\sum_{k=1}^{\fz}|\det A|^{\alpha qk}\lf\|\sum_{|Q|=|\det A|^{-k}}|Q|^{-\frac12}\vec{s}_Q\mathbf{1}_Q\r\|_{L^p(\mathbf{W})}^q\r)^{\frac1q}<\fz.$$
When $q=\fz,$ the $\ell^q$-norm can be replaced by the supremum on $k\geq0$.

\end{definition}

Next we also introduce the definitions of $\varphi$-transform $S_{\varphi}$ and the inverse $\varphi$-transform $T_{\varphi}$ in the inhomogeneous setting.

Let $\Phi, \Psi\in\cs$, $\varphi, \psi\in\cs$ and satisfy
\begin{align*}
\supp \hat{\varphi}, \supp \hat{\psi}\subset[-\pi,\pi]^d\backslash\{0\}
\end{align*}
and
\begin{align}\label{test2}
\overline{\hat{\Phi}}(\xi)\hat{\Psi}(\xi)+\sum_{k=1}^{\fz}\overline{\hat{\varphi}((A^*)^{-k}\xi)}
\hat{\psi}((A^*)^{-k}\xi)=1,  \ \ \mathrm{for \ any} \ \xi\in\rr^d.
\end{align}
For any $\mathbf{j}\in\zz^d$, $k\in\zz$ with $k>0$, define
$$Q_{\mathbf{j},k}=A^{-k}([0,1]^d+\mathbf{j})$$
be the dilated cube, $x_{Q_{\mathbf{j},k}}=A^{-k}\mathbf{j}$.
$$\mathcal{Q}=\lf\{Q_{\mathbf{j},k}:k\in\zz, \mathbf{j}\in\zz^d\r\}.$$
Define

$$\Phi_Q=|\det A|^{-\frac{k}{2}}\Phi(A^{k}x-\mathbf{j})=
|Q|^{\frac12}\Phi(x-x_Q)$$
and
$$\varphi_Q=|\det A|^{-\frac{k}{2}}\varphi(A^{k}x-\mathbf{j})=
|Q|^{\frac12}\varphi(x-x_Q).$$

\begin{definition}
Let $\Phi, \Psi\in\cs$, and  $\varphi, \psi\in\cs$ satisfy \eqref{test2}.
For any vector-valued distribute $\vec{f}$, $f_i\in\cs'$, define the inhomogeneous
$\varphi$-transform $S_{\varphi}=S_{\Phi,\varphi}$:
$S_{\varphi}(\vec{f}):=\{S_{\varphi}(\vec{f})\}_{Q},$
if $k>0$, $$\{S_{\varphi}(\vec{f})\}_{Q}=\lf\{\langle\vec{f},\varphi_Q\rangle\r\}_Q
=\lf\{\lf(\langle f_1,\varphi_Q\rangle,\langle f_2,\varphi_Q\rangle,\ldots,\langle f_m,\varphi_Q\rangle\r)^T\r\}_Q;$$
if $k=0$, $$\{S_{\varphi}(\vec{f})\}_{Q}=\lf\{\langle\vec{f},\Phi_Q\rangle\r\}_Q
=\lf\{\lf(\langle f_1,\Phi_Q\rangle,\langle f_2,\Phi_Q\rangle,\ldots,\langle f_m,\Phi_Q\rangle\r)^T\r\}_Q;$$

For any vector-valued sequence $\vec{s}=\{s_Q\}_{Q,|Q|\leq0}$, define the inhomogeneous inverse
$\varphi$-transform $T_{\psi}=T_{\Psi,\psi}$:
$$T_{\psi}(\vec{s}):=\sum_{\mathcal{Q},|Q|=1}\vec{s}_Q\Psi_Q
+\sum_{Q\in\mathcal{Q},|Q|<1}\vec{s}_Q\Psi_Q,$$

\end{definition}

Given $\Phi, \varphi\in\cs$ satisfying \eqref{test2}, we can show that there exist $\Psi,\psi\in\cs$ such that
\begin{align}\label{test4}
\overline{\hat{\Phi}(\xi)}
\hat{\Psi}(\xi)+\sum_{k=1}^{\fz}\overline{\hat{\varphi}((A^*)^{-k}\xi)}
\hat{\psi}((A^*)^{-k}\xi)=1,  \ \ \mathrm{for \ any} \ \xi\in\rr^d.
\end{align}
Moreover, we have the following identity for vector-valued distribution $\vec{f}$, $f_i\in\cs',$

$$\vec{f}=\sum_{Q\in\mathcal{Q},|Q|=1}\lf\langle \vec{f},\Phi_Q\r\rangle\Psi_Q
+\sum_{k=1}^{\fz}\sum_{Q\in\mathcal{Q},|Q|=|\det A|^{-k}}\lf\langle \vec{f},\varphi_Q\r\rangle\Psi_Q,$$
with convergence in $\cs'$.

We can then show that main results of $\varphi$-transform adapt directly to inhomogeneous setting.
That is, $S_{\varphi}$ is a bounded operator from $B^{\alpha,q}_{p,A}(\rr^d,\mathbf{W})$ to
$b^{\alpha,q}_{p,A}(\mathbf{W})$ and $T_{\psi}$ is a bounded operator
from $b^{\alpha,q}_{p,A}(\mathbf{W})$ to $B^{\alpha,q}_{p,A}(\rr^d,\mathbf{W})$.

\section{Almost diagonal operators}

In this section, we study a class of almost diagonal operators on Besov spaces, which
was introduced in the dyadic case by Frazier et al. \cite{fj89}.  The interest of these operators comes from their close
connection to operators on Besov spaces.

\begin{definition}
Let $\alpha\in\rr$, $p,q\in(0,\fz]$.
Suppose that $J=\beta/p+\max\{0,1-1/p\}$. We say
that a matrix $\{a_{Q,P}\}_{Q,P\in\mathcal{Q}}$ is almost diagonal, 
$A\in\mathbf{ad}^{\alpha,q}_{p,A}(\beta),$ if there exists an $c>0$ such that,
$$\sup_{Q,P\in\mathcal{Q}}
\lf|a_{Q,P}\r|<c
\lf(\frac{|Q|}{|P|}\r)^{\alpha}\lf[1+\frac{\rho_A(x_Q-x_P)}{\max\{|P|,\,|Q|\}}\r]^{-J-c}
\min\lf\{\lf(\frac{|Q|}{|P|}\r)^{\frac{1+c}{2}},\,\lf(\frac{|P|}{|Q|}\r)^{\frac{1+c}{2}+J-1}\r\}.
$$
\end{definition}

The following conclusion demonstrate that we study some property of operators on Besov space $\dot{B}^{\alpha,q}_{p,A}(\rr^d,\mathbf{W})$ by considering corresponding operators on $\dot{b}^{\alpha,q}_{p,A}(\mathbf{W})$.
 \begin{thm}\label{t8.1}
 Let $\alpha\in\rr$, $0<q\leq\fz$, $1\leq p<\fz$, and let $\mathbf{W}$ be a doubling
matrix of order $p$ with doubling exponent $\beta$. If $A\in \mathbf{ad}^{\alpha,q}_{p,A}(\beta)$, 
then $A:\dot{b}^{\alpha,q}_{p,A}(\mathbf{W})\rightarrow \dot{b}^{\alpha,q}_{p,A}(\mathbf{W})$ is bounded.
\end{thm}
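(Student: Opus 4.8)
The plan is to reduce boundedness on the matrix-weighted sequence space $\dot{b}^{\alpha,q}_{p,A}(\mathbf{W})$ to a scalar estimate by passing to the reducing operators $\{\mathcal{A}_Q\}_{Q\in\mathcal{Q}}$, using the equivalence of norms established in the lemma of Section \ref{s3}. Thus it suffices to show that $A=\{a_{Q,P}\}$ maps $\dot{b}^{\alpha,q}_{p,A}(\{\mathcal{A}_Q\})$ into itself. Writing $\vec{t}_Q:=\sum_{P}a_{Q,P}\vec{s}_P$, the target quantity $\|\mathcal{A}_Q\vec{t}_Q\|_{\mathcal{H}}$ is bounded by $\sum_P|a_{Q,P}|\,\|\mathcal{A}_Q\mathcal{A}_P^{-1}\|\,\|\mathcal{A}_P\vec{s}_P\|_{\mathcal{H}}$. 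So the real content is a scalar almost-diagonal estimate in which the extra factor $\|\mathcal{A}_Q\mathcal{A}_P^{-1}\|$ appears; one absorbs it using the doubling hypothesis on $\mathbf{W}$, which gives a polynomial bound $\|\mathcal{A}_Q\mathcal{A}_P^{-1}\|\lesssim [1+\rho_A(x_Q-x_P)/\max\{|P|,|Q|\}]^{\beta/p}\,(\text{scale factors})$ when $|Q|\le|P|$ and similarly in the other regime. Because $J=\beta/p+\max\{0,1-1/p\}$ already contains the $\beta/p$ term, the decay exponent $J+c$ in the definition of $\mathbf{ad}^{\alpha,q}_{p,A}(\beta)$ is precisely calibrated so that, after multiplying by $\|\mathcal{A}_Q\mathcal{A}_P^{-1}\|$, enough decay $[1+\rho_A(x_Q-x_P)/\max\{|P|,|Q|\}]^{-\max\{0,1-1/p\}-c}$ survives.

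Next I would split the matrix into its two natural pieces according to the $\min\{\cdot,\cdot\}$ in the definition: the part $a^{(0)}$ supported on $\{|Q|\le|P|\}$, governed by the factor $(|Q|/|P|)^{(1+c)/2}$, and the part $a^{(1)}$ supported on $\{|Q|>|P|\}$, governed by $(|P|/|Q|)^{(1+c)/2+J-1}$. For each fixed pair of scales $k$ (for $Q$) and $\ell$ (for $P$), I would estimate $\big\|\sum_{|Q|=|\det A|^{-k}}|Q|^{-1/2}\|\mathcal{A}_Q\vec{t}_Q\|\mathbf{1}_Q\big\|_{L^p}$ in terms of the analogous quantity at scale $\ell$, obtaining a bound of the form $C\,|\det A|^{-(k-\ell)\varepsilon}\,\omega^{(k,\ell)}\,\|\cdots\|_{L^p,\,\text{scale }\ell}$ with a summable geometric factor in $|k-\ell|$; the spatial sum over $P$ at fixed scale is handled by the convolution/maximal-function structure of the kernel $[1+\rho_A(x_Q-x_P)/\max\{|P|,|Q|\}]^{-J-c}$ together with a shifted discrete Hölder inequality, exactly as in the proof of the maximal-operator lemma earlier in the paper. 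Then I would sum over $\ell$ using Young's inequality for the discrete convolution in the scale variable (in $\ell^q$, or handle $q<1$ by the $q$-triangle inequality), and finally take the $\ell^q(|\det A|^{k\alpha q})$ norm in $k$.

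The main obstacle is the bookkeeping of exponents: one must verify that the geometric gain $|\det A|^{-|k-\ell|\varepsilon}$ genuinely has $\varepsilon>0$ in \emph{both} scale regimes after the reducing-operator factor $\|\mathcal{A}_Q\mathcal{A}_P^{-1}\|$ has been inserted, and that the residual spatial decay exponent stays strictly above the critical threshold $d$ (so the spatial sums converge) and above $d\max\{1,1/p\}$ where a Hölder step is used. The factor $\max\{0,1-1/p\}$ in $J$ and the $+J-1$ shift in the second branch of the $\min$ are exactly what make this work when $p\ge1$; in our setting $p\in[1,\infty)$ so $\max\{0,1-1/p\}=1-1/p$, which slightly simplifies the arithmetic. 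A secondary point requiring care is the convergence of $\sum_P a_{Q,P}\vec{s}_P$ and of the resulting series $T_\psi(A\vec{s})$ in $\cs'$, but this follows once the norm estimate is in hand, by the completeness theorem of Section \ref{s3} together with a density argument on finitely supported sequences.
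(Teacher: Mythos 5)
The paper states Theorem \ref{t8.1} without any proof, so there is no argument of record to compare yours against. Your strategy --- pass to the reducing operators $\{\mathcal{A}_Q\}$ via the norm-equivalence lemma of Section \ref{s3}, insert the factor $\|\mathcal{A}_Q\mathcal{A}_P^{-1}\|$, absorb it using the doubling hypothesis, split the matrix according to the two branches of the $\min$ in the definition of $\mathbf{ad}^{\alpha,q}_{p,A}(\beta)$, and then sum in space and in scale by Young's inequality (or the $q$-triangle inequality when $q<1$) --- is the standard one; it is exactly how the analogous isotropic result is proved in Roudenko \cite{r03}, and it is the natural route here.

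That said, as written the proposal is incomplete at precisely the two points that carry the content of the theorem. First, the estimate $\|\mathcal{A}_Q\mathcal{A}_P^{-1}\|\lesssim\lf[1+\rho(x_Q-x_P)/\max\{|P|,|Q|\}\r]^{\beta/p}\cdot(\text{scale factors})$ is asserted but not proved, and no such lemma appears anywhere in the paper; in the anisotropic setting it has to be derived from the doubling condition by iterating the doubling inequality over the dilated balls $x+B_k$ (using $\omega_{p,Q}(\mathcal{A}_P^{-1}\vec{u})\sim\|\mathcal{A}_Q\mathcal{A}_P^{-1}\vec{u}\|_{\mathcal{H}}$), which is where the exponent $\beta/p$ actually comes from. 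This is true and not difficult, but it must be written out, and its precise form (in particular the scale factors in the two regimes $|Q|\le|P|$ and $|Q|>|P|$) is what determines whether the second step closes. Second, you explicitly defer the exponent bookkeeping (``the main obstacle''), i.e.\ the verification that after inserting the reducing-operator factor the residual decay $\lf[1+\rho(x_Q-x_P)/\max\{|P|,|Q|\}\r]^{-\max\{0,1-1/p\}-c}$ together with the factors $(|Q|/|P|)^{(1+c)/2}$ and $(|P|/|Q|)^{(1+c)/2+J-1}$ still produces a geometric gain in $|k-\ell|$ in \emph{both} regimes. Since $J=\beta/p+\max\{0,1-1/p\}$ is calibrated for exactly this purpose the computation should succeed, but until it is carried out the argument is a plausible plan rather than a proof. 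The reduction step itself is sound, and the convergence issues you flag at the end are correctly identified as secondary.
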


\begin{definition} 
Let $T$ be a continuous linear operator from $\cs'$ to $\cs'$. We say that
$T$ is an almost diagonal operator for 
$B^{\alpha,q}_{p,A}(\rr^d,\mathbf{W})$, and write $T\in\mathbf{AD}^{\alpha,q}_{p,A}(\beta)$, if for some
pair of mutually admissible kernels $(\varphi,\psi)$, the matrix $\{a_{Q,P}\}\in\mathbf{ad}^{\alpha,q}_{p,A}(\beta)$, where
$a_{Q,P}=\lf\langle T \psi_P, \varphi_Q\r\rangle.$
\end{definition}
\begin{remark} {\rm It is easy to see that,
the definition of $T\in\mathbf{AD}^{\alpha,q}_{p,A}(\beta)$ is independent of the choice of the
pair $(\varphi,\psi)$.}
\end{remark}

As a straightforward consequence of Theorem \ref{t8.1}, we can deduce the following conclusion:
\begin{corollary}
 Let $T\in\mathbf{AD}^{\alpha,q}_{p,A}(\beta)$, $\alpha\in\rr$, $p\in[1,\fz)$ and $q\in(0,\fz)$. Then $T$ extends
to a bounded operator on $\dot{B}_{p,A}^{\alpha,q}(\rr^d,\mathbf{W})$ if $\mathbf{W}\in A_p.$
\end{corollary}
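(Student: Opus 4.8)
The plan is to reduce the boundedness of $T$ on $\dot{B}_{p,A}^{\alpha,q}(\rr^d,\mathbf{W})$ to the boundedness of an associated almost diagonal matrix on the sequence space $\dot{b}_{p,A}^{\alpha,q}(\mathbf{W})$, exactly as in the unweighted Frazier--Jawerth scheme, using the $\varphi$-transform machinery already established. First I would fix a pair of mutually admissible kernels $(\varphi,\psi)$ satisfying \eqref{test2}; by the Remark following Theorem \ref{t3.3}, the definition of $\dot{B}_{p,A}^{\alpha,q}(\rr^d,\mathbf{W})$ and the membership $T\in\mathbf{AD}^{\alpha,q}_{p,A}(\beta)$ are both independent of this choice, so we may use whichever pair is convenient. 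The key commutative-diagram identity is that for $\vec f\in\dot{B}_{p,A}^{\alpha,q}(\rr^d,\mathbf W)$ with reproducing formula $\vec f=\sum_{P}\vec s_P(\vec f)\,\psi_P$, one has, formally,
$$
\vec s_Q(T\vec f)=\lf\langle T\vec f,\varphi_Q\r\rangle
=\sum_{P\in\mathcal Q}\lf\langle T\psi_P,\varphi_Q\r\rangle\,\vec s_P(\vec f)
=\sum_{P\in\mathcal Q}a_{Q,P}\,\vec s_P(\vec f),
$$
so that $S_\varphi\circ T\circ T_\psi$ acts on sequences precisely as the matrix $\{a_{Q,P}\}$.

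The argument then proceeds in four steps. Step one: given $\vec f\in\dot{B}_{p,A}^{\alpha,q}(\rr^d,\mathbf W)$, apply Theorem \ref{t3.3} to get $\{\vec s_P(\vec f)\}_P\in\dot{b}_{p,A}^{\alpha,q}(\mathbf W)$ with norm controlled by $\|\vec f\|_{\dot{B}_{p,A}^{\alpha,q}(\rr^d,\mathbf W)}$. Step two: since $T\in\mathbf{AD}^{\alpha,q}_{p,A}(\beta)$ means $\{a_{Q,P}\}\in\mathbf{ad}^{\alpha,q}_{p,A}(\beta)$, invoke Theorem \ref{t8.1} (valid here because $\mathbf W\in A_p$ is in particular a doubling matrix weight of order $p$, with doubling exponent $\beta$) to conclude that $\{\vec s_Q(T\vec f)\}_Q=\{\sum_P a_{Q,P}\vec s_P(\vec f)\}_Q\in\dot{b}_{p,A}^{\alpha,q}(\mathbf W)$ with the right norm bound. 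Step three: apply Lemma \ref{mole} together with the inverse-transform identity $T\vec f=\sum_Q\vec s_Q(T\vec f)\,\psi_Q$ (recall $\{\psi_Q\}_Q$ is a family of smooth molecules for $\dot{B}_{p,A}^{\alpha,q}(\rr^d,\mathbf W)$) to get
$$
\lf\|T\vec f\,\r\|_{\dot{B}_{p,A}^{\alpha,q}(\rr^d,\mathbf W)}
\lesssim\lf\|\{\vec s_Q(T\vec f)\}_Q\r\|_{\dot{b}_{p,A}^{\alpha,q}(\mathbf W)}
\lesssim\lf\|\{\vec s_P(\vec f)\}_P\r\|_{\dot{b}_{p,A}^{\alpha,q}(\mathbf W)}
\lesssim\lf\|\vec f\,\r\|_{\dot{B}_{p,A}^{\alpha,q}(\rr^d,\mathbf W)}.
$$
Step four: since $\dot{B}_{p,A}^{\alpha,q}(\rr^d,\mathbf W)$ contains the image of $T_\psi$ densely enough (it is complete, and the finite sequences are dense in $\dot{b}_{p,A}^{\alpha,q}(\mathbf W)$ when $q<\infty$), the a priori estimate on the dense class extends $T$ to a bounded operator on all of $\dot{B}_{p,A}^{\alpha,q}(\rr^d,\mathbf W)$; here the restriction $q\in(0,\fz)$ in the corollary is exactly what guarantees this density.

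The main obstacle is justifying the commutation identity $\vec s_Q(T\vec f)=\sum_P a_{Q,P}\vec s_P(\vec f)$ rigorously: it requires interchanging the continuous linear operator $T\colon\cs'\to\cs'$ with the sum $\vec f=\sum_P\vec s_P(\vec f)\psi_P$, which converges only in $\cs'$ (modulo polynomials in the homogeneous case). One handles this by testing against $\varphi_Q\in\cs$ and using continuity of $T$ on $\cs'$ together with the convergence of partial sums; the polynomial ambiguity is harmless because $\varphi_Q$ has all moments vanishing (from $\supp\hat\varphi\subset[-\pi,\pi]^d\setminus\{\mathbf 0\}$), so pairing with it kills polynomials. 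A secondary technical point is verifying the almost diagonal decay $\{a_{Q,P}\}\in\mathbf{ad}^{\alpha,q}_{p,A}(\beta)$ is genuinely independent of the admissible pair $(\varphi,\psi)$, as asserted in the Remark; this follows by a standard two-kernel reproduction argument showing that changing kernels composes $\{a_{Q,P}\}$ with two almost diagonal matrices, whose class is closed under composition (a fact implicit in, and provable by the same estimates underlying, Theorem \ref{t8.1}).
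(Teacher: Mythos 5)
Your proposal is correct and follows essentially the same route as the paper: reduce to the dense class, use the $\varphi$-transform decomposition to write $T\vec f=\sum_Q\vec t_Q\psi_Q$ with $\vec t_Q=\sum_P\langle T\psi_P,\varphi_Q\rangle\vec s_P(\vec f)$, then chain the inverse-transform bound (Lemma \ref{mole}), the almost diagonal boundedness (Theorem \ref{t8.1}), and the $\varphi$-transform bound (Theorem \ref{t3.3}). Your added remarks on justifying the interchange of $T$ with the $\cs'$-convergent sum and on the density step only make explicit what the paper leaves implicit.
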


\begin{proof}
By the density, we need to consider $\vec{f}$ with $f_i\in\cs$, $i=1,2,\ldots,m$. 
Let $(\varphi,\psi)$ be a pair of mutually admissible
kernels. 
Let 
$$\vec{t}_Q=\sum_{P\in\mathcal{Q}} \langle T\psi_P,\, \varphi_Q\rangle \vec{s}_P(\vec{f}).$$ 
Notice that $\{\langle T\psi_P, \varphi_Q\rangle\}_{Q,P}\in \mathbf{ad}^{\alpha,q}_{p,A}(\beta)$. From the $\varphi$-transform decomposition
$$\vec{f}=\sum_{P\in\mathcal{Q}}\vec{s}_P(\vec{f})\psi_P,$$ 
and Theorem \ref{t8.1}, 
we can deduce that
\begin{align*}
\lf\|T(\vec{f})\r\|_{\dot{B}_{p,A}^{\alpha,q}(\rr^d,\mathbf{W})}
=&\lf\|\sum_{P\in\mathcal{Q}}\vec{s}_P(\vec{f})T\psi_P\r\|_{\dot{B}_{p,A}^{\alpha,q}
(\rr^d,\mathbf{W})}\\      
=&\lf\|\sum_{Q\in\mathcal{Q}}\lf(\sum_{P\in\mathcal{Q}}\lf\langle T\psi_P, \varphi_Q\r\rangle\vec{s}_P(\vec{f})\r)\psi_Q\r\|_{\dot{B}_{p,A}^{\alpha,q}(\rr^d,\mathbf{W})}\\
=&\lf\|\sum_{Q\in\mathcal{Q}}\vec{t}_Q\psi_Q\r\|_{\dot{B}_{p,A}^{\alpha,q}(\rr^d,\mathbf{W})}
\lesssim\lf\|\lf\{\vec{t}_Q\r\}_{Q\in\mathcal{Q}}\r\|_{\dot{b}^{\alpha,q}_{p,A}(\mathbf{W})}\\
\lesssim&\lf\|\lf\{\vec{s}_Q\r\}_{Q\in\mathcal{Q}}\r\|_{\dot{b}^{\alpha,q}_{p,A}(\mathbf{W})}
\lesssim\lf\|\vec{f}\,\r\|_{\dot{B}_{p,A}^{\alpha,q}(\mathbf{W})}.
\end{align*}

\end{proof}

\textbf{Acknowledgements.}
%The authors are very grateful to Prof. Baoxiang Wang for the useful discussion.
%The authors are partially supported by National Natural Science Foundation of China (No. 12071355, No. 12325105, No. 12031004, and No. W2441002).
%The authors would like to thank both referees for their very
%careful reading and many valuable comments which indeed improve the presentation of this article.
X. Liu is supported by the Gansu Province Education Science and Technology Innovation Project (No. 224040), and
the Foundation for Innovative Fundamental Research Group Project of Gansu Province (No. 25JRRA805).
W. Wang is supported by
 China Postdoctoral Science Foundation (No. 2024M754159), and Postdoctoral Fellowship Program of CPSF (No. GZB20230961).

%\textbf{Acknowledgements.} The authors would like to express their
%deep thanks to the referees for their very careful reading and useful
%comments which do improve the presentation of this article.

\bigskip

\end{document}